\title{The Motivic Class of the Space of Genus 0 Maps to the Flag Variety}
\author{Jim Bryan}
\address{
Dept.\ of Math\\
University of British Columbia \\
Vancouver, British Columbia, Canada
}
\email{jbryan@math.ubc.ca}
\author{Bal\'{a}zs Elek}
\address{
  Dept.\ of Math\\
  University of New South Wales \\
  Sydney, New South Wales, Australia
}
\email{b.elek@unsw.edu.au}
\author{Freddie Manners}
\address{
  Google DeepMind / University of California San Diego \\
  California, USA
}
\email{fmanners@ucsd.edu}
\author{George Salafatinos}
\address{
  Google DeepMind \\
  Mountain View, California, USA
}
\email{salafatinos@google.com}
\author{Ravi Vakil}
\address{
Dept.\ of Math\\
Stanford University\\
Palo Alto, California, USA
}
\email{rvakil@stanford.edu}
\newtheorem{theorem}{Theorem}[section]
\newtheorem{proposition}[theorem]{Proposition}
\newtheorem{conjecture}[theorem]{Conjecture}
\newtheorem{lemma}[theorem]{Lemma}
\newtheorem{corollary}[theorem]{Corollary}
\newtheorem*{question}{Question}
\newtheorem{aitheorem}{Theorem}
\newtheorem{ailemma}[aitheorem]{Lemma}
\newtheorem{aiproposition}[aitheorem]{Proposition}
\theoremstyle{definition}
\newtheorem{remark}[theorem]{Remark}
\newtheorem{def-theorem}[theorem]{Definition-Theorem}
\newtheorem{definition}[theorem]{Definition}
\newtheorem{example}[theorem]{Example}
\newtheorem{aidefinition}[aitheorem]{Definition}
\newmdtheoremenv[style=promptstyle]{prompt}{Prompt}
\newmdtheoremenv[style=responsestyle]{response}{Response}
\newcommand{\CC}{{\mathbb{C}}}          
\newcommand{\RR}{{\mathbb{R}}}		
\newcommand{\ZZ}{{\mathbb{Z}}}		
\newcommand{\QQ}{{\mathbb{Q}}}		
\newcommand{\FF}{{\mathbb{F}}}
\newcommand{\kk}{{\mathbb{k}}}
\renewcommand{\AA} {{\mathbb{A}}}
\newcommand{\calE}{\mathcal{E}}
\newcommand{\calA}{\mathcal{A}}
\newcommand{\PP}{\mathbb{P}}
\newcommand{\OO}{\mathcal{O}}
\newcommand{\pt}{\operatorname{pt}}
\newcommand{\spn}{\operatorname{span}}
\newcommand{\GL}{\operatorname{GL}}
\newcommand{\PGL}{\operatorname{PGL}}
\newcommand{\Aut}{\operatorname{Aut}}
\newcommand{\Sym}{\operatorname{Sym}}
\newcommand{\Stab}{\operatorname{Stab}}
\renewcommand{\top}{\mathsf{top}}
\newcommand{\Gr}{\operatorname{Gr}}
\newcommand{\Id}{\operatorname{Id}}
\newcommand{\homotopyeq}{\simeq}
\newcommand{\Fl}{\operatorname{Fl}_{n+1}}
\newcommand{\Flnk}[1]{\operatorname{Fl}_{n+1,#1}}
\newcommand{\Var}{\mathsf{Var}}
\newcommand{\onto}{\twoheadrightarrow}
\newcommand{\LL}{\mathbb{L}}
\begin{document}

\begin{abstract}
Let $\Fl$ be the variety of complete flags in $\AA^{n+1}$ and let
$\Omega^{2}_{\beta}(\Fl)$ be the space of based maps $f:\PP^{1}\to
\Fl$ in the class $f_{*}[\PP^{1}]=\beta$. We show that under a mild
positivity condition on $\beta$, the class of
$\Omega^{2}_{\beta}(\Fl)$ in $K_{0}(\Var )$, the Grothendieck group of
varieties, is given by
\[
[\Omega^{2}_{\beta}(\Fl )] = [\GL_{n}\times \AA^{a}].
\]

The proof of this result was obtained in conjunction with Google Gemini and related tools.
We briefly discuss this research interaction, which may be of independent interest.
However, the treatment in this paper is entirely human-authored (aside from excerpts in
an appendix which are clearly marked as such).
\end{abstract}

\maketitle 

\markboth{Motivic Class of maps to the Flag Variety}  {Bryan--Elek--Manners--Salafatinos--Vakil}

\section{Introduction}\label{sec: intro}

Let $\Fl = \GL_{n+1}/B$ be the complete flag variety which parameterizes
flags of quotients:
\[
\kk^{n+1} \onto E_{n} \onto \dotsb \onto E_{1}
\]
where $\dim E_{k}=k$. There is a universal sequence of vector bundle
quotients on $\Fl$: 
\[
\OO_{\Fl}^{\oplus n+1}=\mathcal{E}_{n+1} \onto \mathcal{E}_{n}\onto \dotsb \onto
\mathcal{E}_{1}. 
\]

Let $\Omega^{2}_{\beta}(\Fl)$ be the space of genus zero based maps in
the class $\beta$:
\smallskip
\[
\Omega^{2}_{\beta}(\Fl) = \left\{f:\PP^{1}\to \Fl\, : \quad 
f_{*}([\PP^{1}])=\beta,\quad  f([1:0])=[\Id] \right\}
\]
\smallskip
where $\beta \in A_{1}(\Fl )$ and $[\Id]\in \GL_{n+1}/B$ is the base
point in $\Fl$. 

There is an isomorphism $A_{1}(\Fl )\cong \ZZ^{n}$ such that the
$n$-tuple associated to $\beta$ is given by $(d_{1},\dotsc ,d_{n})$
where
\[
d_{i} =\deg (f^{*}\mathcal{E}_{i}).
\]

We say that $\beta =(d_{1},\dotsc ,d_{n})$ is \emph{strictly monotonic} if
$0<d_{n}<d_{n-1}<\dotsb <d_{1}$. The main result of this paper is the following:

\begin{theorem}\label{thm: main result}
Suppose $\beta =(d_{1},\dotsc ,d_{n})$ is strictly monotonic. Then we have the
following equality in $K_{0}(\Var_{\kk})$, the Grothendieck group of
varieties over $\kk$:
\[
\left[\Omega^{2}_{d_{n},\dotsc ,d_{1}}(\Fl ) \right] = \left[\GL_{n}\times \AA^{D-n^{2}} \right]
\]
where $D=\sum_{k=1}^{n}2d_{k}$. 
\end{theorem}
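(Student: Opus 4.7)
The plan is to parameterize $\Omega^2_\beta(\Fl)$ by explicit polynomial data and compute the motivic class by stratification, following the pattern of the $n=1$ case: $\Omega^2_d(\PP^1)$ identifies with the space of coprime polynomial pairs $(p, q)$ (with $p$ monic of degree $d$, $\deg q < d$, and $\gcd(p,q) = 1$), and a direct inclusion-exclusion on the $\gcd$ yields $[\Omega^2_d(\PP^1)] = (\LL-1)\LL^{2d-1} = [\GL_1] \cdot \LL^{D - 1}$.

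A based map $f : \PP^1 \to \Fl$ of class $\beta$ dualizes to a nested filtration $V_1 \subset \cdots \subset V_n \subset \OO_{\PP^1}^{n+1}$ of subbundles with $\mathrm{rk}(V_j) = j$ and $\deg V_j = -d_{n+1-j}$, whose fiber at $\infty$ is the standard flag. Setting $e_j := d_{n+1-j} - d_{n+2-j}$ (with $d_{n+1} := 0$), so that $V_j / V_{j-1} \cong \OO_{\PP^1}(-e_j)$ and $\sum_j e_j = d_1$, strict monotonicity ensures all $e_j > 0$. By choosing lifts of generators of the graded pieces, the filtration is encoded by an $(n+1) \times n$ polynomial matrix $A(z) = [v_1 \mid \cdots \mid v_n]$ with $\deg v_j \le e_j$, well-defined modulo an upper-triangular polynomial gauge. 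The saturation condition that each partial span $v_1, \ldots, v_j$ defines a rank $j$ subbundle is the coprimality condition that the $j \times j$ minors of $[v_1 \mid \cdots \mid v_j]$ have no common zero in $\AA^1$. The basepoint condition at $\infty$ imposes that the leading coefficients of the columns span the standard rank-$n$ subspace of $\kk^{n+1}$.

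To compute the motivic class, I would stratify by the common-factor structure of the nested minor systems, generalizing the $\gcd$-stratification of the $n=1$ case. Each stratum should correspond to a reduction in which certain $d_i$'s decrease (with strict monotonicity preserved), so the problem becomes self-similar; iterating and applying inclusion-exclusion should yield a closed-form expression of the shape $[\GL_n] \cdot \LL^{D - n^2}$. The $[\GL_n]$ factor is expected to encode the combinatorics of nested invertibility conditions on the leading data at $\infty$ (analogous to how $(\LL - 1) = [\GL_1]$ arose from a single coprimality condition in the $n=1$ case). The main obstacle will be organizing the nested stratification so that it telescopes cleanly across the $n$ different minor systems: strict monotonicity should keep all intermediate $e_j$ positive so that the recursion does not degenerate, but bookkeeping the interaction between common factors at different levels $j$ is the nontrivial step.
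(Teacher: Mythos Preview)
Your proposal is an outline, not a proof: the central step---organizing the nested inclusion-exclusion across all $n$ minor systems so that it telescopes---is explicitly left as ``the nontrivial step,'' and you give no mechanism for carrying it out. A direct simultaneous stratification by the common factors of the $j\times j$ minor systems entangles the levels: factoring a degree-$m$ common divisor out of the $j$-th system decreases $d_{n+1-j}$ by $m$ while leaving the other $d_i$ fixed, so the recursion is indexed by tuples of independent reductions and (contrary to your claim) strict monotonicity is \emph{not} preserved on a generic stratum. Your parameterization also passes through a quotient by the upper-triangular polynomial gauge group, and extracting a motivic class from such a quotient is not automatic. Finally, the $[\GL_n]$ factor does not come from ``invertibility conditions on the leading data at $\infty$''---the basepoint condition \emph{fixes} that data---so the heuristic for where the answer comes from is off.

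The paper avoids the nested combinatorics by replacing your global stratification with a tower of forgetful maps through partial flag varieties,
\[
\pi_k : \Omega^2_{d_n,\dots,d_k}(\Flnk{k}) \longrightarrow \Omega^2_{d_n,\dots,d_{k+1}}(\Flnk{k+1}),
\]
whose fiber over a partial flag $(\OO^{n+1}\onto E_n\onto\cdots\onto E_{k+1})$ is the space $N_p(E_{k+1}(d_k-d_{k+1}))$ of based nowhere-vanishing sections of a twist of $E_{k+1}$. The key point (Proposition~\ref{prop: key prop giving [N(E)]}) is that under strict monotonicity this twist has all summands of degree $\ge 1$, and then a single-variable $\gcd$ inclusion-exclusion---exactly your $n=1$ computation---shows $[N_p(E)]=\LL^{(k+1)d_k-kd_{k+1}-k}(\LL^k-1)$ \emph{independent of the splitting type of $E_{k+1}$}. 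This independence is the missing idea: it decouples the levels, so that after stratifying the base by splitting type (plus a basepoint-depth refinement) each $\pi_k$ is motivically trivial, and the theorem is the product $\prod_{k=1}^n(\LL^k-1)\cdot\LL^{\alpha}=[\GL_n]\cdot\LL^{D-n^2}$. Your $n=1$ argument is literally the top step $k=n$ of this tower; what you are missing is that it should be iterated through partial flags, one rank at a time, rather than attempted globally.
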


In particular, Theorem~\ref{thm: main result} gives an equality of point counts over finite fields (see Corollary~\ref{cor:counting}).

Our original interest in the variety $\Omega_{\beta }^{2}(\Fl )$ is as
an algebraic counterpart to the double loop space in topology:
\[
\Omega^{2}_{\beta ,\top} (\Fl ) = \left\{f:S^{2}\to \Fl
,\,f_{*}([S^{2}])=\beta , f([1:0])=[\Id ] \right\}
\]
where we take $\Fl$ over the complex numbers with the complex analytic
topology, we consider all continuous based maps from $S^{2}\cong
\PP^{1}_{\CC}$, and $\beta \in H_{2}(\Fl )$. There is a natural
inclusion
\[
\Omega^{2}_{\beta}(\Fl )\hookrightarrow \Omega^{2}_{\beta ,\top}(\Fl )
\]
and by a theorem of Boyer, Hurtubise, Mann, and Milgram
~\cite{Boyer-Mann-Hurtubise-Milgram}
the above map is $k(\beta
)$-connected where $k(\beta )\to \infty$ as $\beta \to \infty$ in a
suitable sense. In other words, the algebraic double loop space of
$\Fl$ can be viewed as a homotopy approximation to the topological
double loop space of $\Fl$ and this approximation gets better and
better as the degree increases.

By elementary homotopy theory considerations, there is a homotopy
equivalence
\[
\Omega^{2}_{\beta ,\top} (\Fl )\homotopyeq \Omega^{2}_{\top}(U(n+1))
\]
for every $\beta$ (there is no homology subscript on the right since $H_{2}(U(n+1))=0)$.

The space $\Omega^{2}_{\top}(U(n+1))$ is of fundamental interest in
topology. One of our prime interests is to explore to what extent the
algebraic double loop spaces of $\Fl$ capture the homotopy type of
$\Omega^{2}_{\top}(U(n+1))$. The \emph{rational} homotopy type of the
double loop space is known:
\[
\Omega^{2}_{\top}(U(n+1))\homotopyeq_{\QQ} U(n).
\]

So then a question of basic interest is the following:

\begin{question}
  How does the rational homotopy type of
  $\Omega^{2}_{\beta }(\Fl )$ compare to the rational homotopy type of
  $U(n)$?
\end{question}

For example, by the Boyer--Hurtubise--Mann--Milgram result, we know that
as $\beta =(d_{n},\dotsc ,d_{1})$ tends to infinity, the homotopy
groups of $\Omega^{2}_{d_{n},\dotsc ,d_{1}}(\Fl )$ and $U(n)$ agree.

A recent result of Bryan and Elek~\cite{Bryan-Elek-Postcard} says that
for the minimal strictly monotonic case ($\beta =(n,n-1,\dotsc ,1)$) there is
an isomorphism of varieties
\[
\Omega^{2}_{n,n-1,\dotsc ,1}(\Fl )\cong \GL_{n}\times \CC^{n}
\]
which has the homotopy type of $U(n)$.

Even though the homotopy type of $\Omega^{2}_{\beta}(\Fl )$ has the
``expected'' homotopy type for the minimal strictly monotonic class and for
infinitely large strictly monotonic classes, there are finite non-minimal
strictly monotonic classes for which  $\Omega^{2}_{\beta}(\Fl )$ does not have
the homotopy type of $U(n)$, not even rationally.
We give a simple case in Example~\ref{ex:homotopy-type} below.

Our main theorem can be viewed in this topological context. Our result
in the Grothendieck group implies that the weight polynomial of
$\Omega^{2}_{\beta}(\Fl )$ is the same as the weight polynomial of
$\GL_{n} \times \CC^{D-n^{2}}$, a variety with the homotopy type of $U(n)$. This
suggests the possibility that the compactly supported cohomology of
$\Omega^{2}_{\beta}(\Fl )$ and $\GL_{n} \times \CC^{D-n^{2}}$ agree (for all
strictly monotonic classes $\beta )$. A slightly more optimistic conjecture is
the following:
\begin{conjecture}\label{conj: H*(Omega(Fl(n+1)),Q)=H*(U(n),Q)}
For strictly monotonic classes $\beta$, the inclusion map
\[
\Omega^{2}_{\beta}(\Fl )\hookrightarrow \Omega^{2}_{\beta ,\top}(\Fl)
\]
induces an isomorphism of rings
\[
H^{*}(\Omega^{2}_{\beta}(\Fl ),\QQ )\cong H^{*}(U(n),\QQ ).
\]
\end{conjecture}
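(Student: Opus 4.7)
The plan is to parametrize $\Omega^2_\beta(\Fl)$ explicitly as a moduli of flags of locally-free subsheaves on $\PP^1$, put this data in a matrix normal form using strict monotonicity, and read off the class $[\GL_n \times \AA^{D-n^2}]$ from the resulting coordinates.

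First, I would identify a based map $f : \PP^1 \to \Fl$ with $f([1{:}0]) = [\Id]$ with a complete flag of locally-free subsheaves
\[
S_n \subset S_{n-1} \subset \dotsb \subset S_1 \subset \OO_{\PP^1}^{\oplus (n+1)}
\]
such that $S_i$ has rank $n+1-i$ and degree $-d_i$, and the fiber $S_\bullet|_{[1{:}0]}$ is the standard flag in $\kk^{n+1}$. Then $S_n \cong \OO_{\PP^1}(-d_n)$, and its inclusion is an $(n+1)$-tuple of sections of $\OO(d_n)$ without common zero; each successive quotient $S_{i-1}/S_i$ is a line bundle of degree $d_i - d_{i-1} < 0$ by strict monotonicity. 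These quotients being line bundles of controlled negative degree provides the rigidity I want to exploit.

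Next, I would introduce matrix coordinates: represent $S_\bullet$ by a lower-triangular $(n+1) \times (n+1)$ polynomial matrix $M(t)$ whose columns successively span the $S_i$ on the affine chart $\PP^1 \setminus \{[0{:}1]\}$ with coordinate $t$. The prescribed degrees $-d_i$ together with strict monotonicity give a strictly increasing column-degree profile; the based condition becomes the normalization $M(0) = \Id$ after modding out by the right action of upper-triangular polynomial matrices preserving this profile. The essential role of strict monotonicity here is to forbid any column-degree exchange, so that this normal form exists uniquely for every flag $S_\bullet$ --- no stratification by Grothendieck splitting type is needed, and one obtains an affine variety cleanly.

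The main step is then to produce a morphism $\Omega^2_\beta(\Fl) \to \GL_n$ whose fibers are $\AA^{D-n^2}$ in a Zariski-locally trivial way, which suffices in $K_0(\Var_\kk)$. The natural candidate is the ``leading coefficient at $\infty = [0{:}1]$'' map: send $M(t)$ to the matrix of top-degree coefficients of its columns (after deleting the row pinned down by normalization), and verify that this leading matrix lies in $\GL_n$, with the remaining sub-leading coefficients free to range over $\AA^{D-n^2}$; a direct count of matrix entries matching prescribed column degrees, subject to the $M(0) = \Id$ constraint, recovers exactly $D - n^2$ free parameters. The principal obstacle will be showing that this leading-coefficient matrix is invertible on all of $\Omega^2_\beta(\Fl)$, not just on a Zariski-open subset --- equivalently, that strict monotonicity forces every flag into the open Bruhat stratum at $\infty$, with no lower-dimensional strata. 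I expect this rigidity to follow from the sufficiently-positive gaps $d_{i-1} - d_i \geq 1$ ensuring the relevant obstruction groups vanish, propagating through the flag inductively; verifying this, together with the stated affine fibration structure, would complete the proof.
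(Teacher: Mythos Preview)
The statement you are addressing is a \emph{conjecture} in the paper; the paper does not prove it. Your stated goal---``read off the class $[\GL_n \times \AA^{D-n^2}]$'' and show this ``suffices in $K_0(\Var_\kk)$''---is not this conjecture but Theorem~\ref{thm: main result}, and equality in $K_0(\Var_\kk)$ does not by itself yield an isomorphism of rational cohomology rings (the paper is explicit about this gap when it states the conjecture). That said, your \emph{method} would, if it succeeded, give more: a Zariski-locally-trivial $\AA^{D-n^2}$-bundle structure over $\GL_n$ would force $\Omega^2_\beta(\Fl)$ to have the homotopy type of $U(n)$, which would indeed imply the conjecture.

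But the method does not succeed. You need the leading-coefficient matrix at $\infty$ to be invertible for \emph{every} point of $\Omega^2_\beta(\Fl)$, i.e., strict monotonicity must force every flag into the open Bruhat cell at $\infty$. This is false. Already for $n=1$, $d_1=2$ (which is strictly monotonic), Example~\ref{ex:homotopy-type} shows $\Omega^2_2(\PP^1)$ has $\pi_2 \cong \ZZ$ and is not rationally homotopy equivalent to $U(1)$; hence it cannot be an affine bundle over $\GL_1$, and there \emph{are} points lying in lower Bruhat cells at $\infty$. The gaps $d_{i-1}-d_i \ge 1$ do not force the open cell, so the ``principal obstacle'' you flag is fatal, not merely technical. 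The paper's proof of Theorem~\ref{thm: main result} accepts that the fibers of the projections $\pi_k$ genuinely vary in isomorphism type (Example~\ref{ex:fibers-different}) but shows their motivic classes are nonetheless constant (Proposition~\ref{prop: key prop giving [N(E)]}), and then stratifies the base so that $\pi_k$ is Zariski locally trivial only on strata---enough for $K_0$, but not for cohomology.
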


\subsection{Related work}

The motivating result of Boyer--Hurtubise--Mann--Milgram
~\cite{Boyer-Mann-Hurtubise-Milgram}
 discussed above built on prior
work by Segal~\cite{Segal-1979}, Kirwan~\cite{Kirwan-1986}, Guest~\cite{Guest-1984}, Mann--Milgram~\cite{Mann-Milgram-1991,Mann-Milgram-1993} and others.
Parts of the proofs below generalize related arguments of Farb--Wolfson~\cite{Farb-Wolfson-2016} in the setting of $\Omega^2_\beta(\Gr(n+1,n))$, as well as the techniques in~\cite{Vakil-Matchett-Wood-2015}.

\subsection{The role of AI in the results of this
paper.}\label{subsec: role of AI}

We should first emphasize that the text of this paper was written from scratch by the authors: aside from the quotations in Appendix~\ref{app:output}, no part of it is AI-generated or closely rewritten AI-generated content.
In particular, the authors are responsible for the correctness of the arguments presented here in the usual way.

However, the proofs of the main results were discovered with very substantial input from Google Gemini~\cite{gemini} and related tools: specifically, \emph{DeepThink}~\cite{deepthink}; and a related unpublished system specialized for mathematics developed by the fourth author, also built on Gemini, and provisionally named \emph{FullProof}.

We now briefly describe the detailed nature of this collaboration.
Further mathematical details are given in Appendix~\ref{app:ai-details}.

In the first instance, the first, second and fifth authors isolated, as a conjecture, a somewhat weaker version of Theorem~\ref{thm: main result} (conjecturing the number of points of the variety $\Omega_\beta^2(\Fl)$ over a finite field $\FF_q$; see Corollary~\ref{cor:counting} below).
In previous work, they had used conventional computational tools (in particular, \emph{Macaulay2}~\cite{M2}) to obtain strong numerical evidence for this conjecture.

This weaker question was then framed as a scaffolding of sub-problems of increasing difficulty, finishing with the problem itself: for example, by first asking for a proof of the result for small values of $n$ and $\beta$.
This scaffolding of sub-problems was then either prompted to the AI system in one go; or in turn, with each new subproblem being annotated with a successful AI response to one or more of the previous subproblems.
We give some representative examples in Appendix~\ref{app:prompts}.

To date, none of the systems used has successfully proved Corollary~\ref{cor:counting} without assistance using these original scaffolding. Instead, the proof was obtained by an iterative human/AI interaction, of the following flavor.
\begin{itemize}
  \item The AI systems provided correct and readable solutions to some earlier problems in the scaffolding, corresponding to special cases.  However, even given these solutions as context, it could not generalize them to the full problem.
  \item A close human analysis of the AI output isolated key intermediate statements which the human mathematicians could see how to generalize, thereby suggesting a proof strategy for the general case.
  \item The AI systems were re-prompted with new questions which hinted at this proof strategy, either with or without the old successful solutions in context.
  \item The hinted approach was enough for the system to generate complete proofs to the new problems, and thereby solve the original (slightly weaker) conjecture.
  \item Finally, the AI systems were asked to extend the arguments to prove the full statement, Theorem~\ref{thm: main result}.
\end{itemize}

Since the authors have opted to present a purely human account of these results in this paper, in some cases the proofs below bear only a high-level resemblance to those suggested by AI tools.  However, it is worth noting that some of the AI-generated proofs -- and in particular those derived from the specialized internal tool \emph{FullProof} -- are already very accomplished.
We give some examples of this model output in Appendix~\ref{app:output}. 

It is natural to ask how close the resemblance is between the AI-contributed proofs, and prior literature that Gemini is likely to have seen in its training data.\footnote{As run, none of these systems had access to the internet or other search tools.}
Certainly the latter includes related work such as
~\cite{Boyer-Mann-Hurtubise-Milgram,Farb-Wolfson-2016},
and it seems likely that being able to build on these arguments made the problem more tractable for the AI systems than some other research problems.
However, the model outputs (such as the one in Appendix~\ref{app:output}) do not appear to the authors to be \emph{that} close to those or (to the best of our knowledge) any other pre-existing sources.
So, absent some future discovery to the contrary, the model's contribution appears to involve a genuine combination of synthesis, retrieval, generalization and innovation of these existing techniques.

\subsection{Acknowledgements}

First, the authors wish to thank Google DeepMind for their extensive support and
collaboration.
In particular, they are grateful to Adam Brown, Vinay Ramasesh and others on the Blueshift team
for supporting, advising and enabling this project.
FM and GS are moreover grateful to Google DeepMind for providing employment and hospitality
for the duration of this work.

FM is also supported by a Sloan Fellowship.

\section{Proof of the main result}

\subsection{Partial flags and a tower of fibrations}%
\label{subsec:partial-flags}

For $1 \le k \le n+1$, we let $\Flnk{k}$ denote the moduli space of partial flag quotients:
\[
\Flnk{k} = \left\{\kk^{n+1} \onto V_{n}\onto \dotsb \onto V_{k} \right\}
\]
where $\dim V_{a} = a$.
There are maps $\Flnk{k} \to \Flnk{k+1}$ for $1 \le k \le n$ by omitting $E_k$.
We have $\Flnk{1}=\Fl$ and $\Flnk{n+1} = \pt$.

There is a universal partial flag quotient of vector bundles on $\Flnk{k}$,%
\footnote{Here we abuse notation slightly by writing $\calE_i$ for the
universal vector bundles with base $\Flnk{k}$ for every $k$.  The
$\calE_i$ for different $k$ are compatible under pullback by $\Flnk{k}
\to \Flnk{k+1}$ in the obvious way.} 
\[
  \OO_{\Flnk{k}}^{n+1} \onto \calE_n \onto \dotsb \onto \calE_k.
\]

Exactly as above, we write $\Omega^2_{d_n,\dotsc,d_k}(\Flnk{k})$ for the space of maps
\[
  \Omega^2_{d_n,\dotsc,d_k}(\Flnk{k}) = \bigl\{ f \colon \PP^1 \to \Flnk{k} : f^\ast(\calE_i) = d_i,\ f([1:0]) = \phi_k \bigr\}
\]
where $\phi_k \in \Flnk{k}$ is the standard partial flag
\[
  \kk^{n+1} \onto \kk^{n+1} / \spn(e_{n+1}) \onto \dots \onto \kk^{n+1} / \spn(e_{k+1},\dots,e_{n+1});
\]
i.e., the image of the standard flag $[\Id] \in \Fl$ under the map $\Fl \to \Flnk{k}$.

A closed point $f \in \Omega^{2}_{d_{n},\dotsc ,d_{k}}(\Flnk{k})$
corresponds to a sequence of vector bundle quotients on $\PP^{1}$:
\[
\{\OO^{\oplus (n+1)}_{\PP^{1}}\onto E_{n}\onto \dotsb \onto E_{k} \}
\]
where $E_{j}=f^{*}(\mathcal{E}_{j})$. We write
\begin{equation}\label{eq:pik-def}
  \pi_{k}: \Omega^2_{d_{n},\dotsc ,d_{k}}(\Flnk{k}) \to
  \Omega^2_{d_{n},\dotsc ,d_{k+1}}(\Flnk{k+1})  
\end{equation}
for the map induced by $\Flnk{k} \to \Flnk{k+1}$.

\begin{definition}\label{defn: N(F)}
  Let $\phi \colon F \to \PP^1$ be a vector bundle on $\PP^1$ of rank $r$,
  and let $p$ (the basepoint) be any projectivized vector over $[1:0]$, i.e., $p \in \PP(\phi^{-1}([1:0])) \cong \PP^{r-1}$.  
  We write:
  \begin{itemize}
    \item $\Sigma(F) = \PP (H^0(F))$, the space of nonzero sections of $F$ up to scalars $\kk^\ast$;
    \item $N(F) \subseteq \Sigma(F)$ for the (possibly empty) space of \emph{nowhere vanishing} sections of $F$, up to scalars;
    \item $\Sigma_p(F)$ for the \emph{based} nonzero sections:
      \[
        \Sigma_p(F) = \bigl\{ \sigma = [\sigma_1:\dotsc:\sigma_r] \in \Sigma(F) \colon [\sigma_1([1:0]):\dotsc:\sigma_r([1:0])] = p \bigr\}
      \]
      (so in particular, $(\sigma_1([1:0]),\dots,\sigma_r([1:0])) \ne 0$);
    \item $N_p(F)$ for the based nowhere vanishing sections, $N_p(F) = \Sigma_p(F) \cap N(F)$.
  \end{itemize}
\end{definition}

\begin{lemma}\label{lem: fibers of Map(Fl(n+1,k))-->Map(Fl(n+1,k+1))}
  Suppose that $d_{n}<d_{n-1}<\dotsb <d_{1}$ and $1 \le k \le n$.
  For any $f \in \Omega^2_{d_n,\dotsc,d_{k+1}}(\Flnk{k+1})$, the fiber $\pi_k^{-1}(f) \subseteq \Omega^2_{d_n,\dots,d_k}(\Flnk{k})$ obeys
  \[
    \pi_k^{-1}(f)  \cong N_{v_{k+1}}(E_{k+1}(d_k-d_{k+1})),
  \]
  where by convention $d_{n+1}=0$, and $v_{k+1}$ is the image of the standard basis vector $e_{k+1} \in \kk^{n+1}$ in the standard quotient $\kk^{n+1} / \spn(e_{k+2},\dots,e_{n+1})$.
\end{lemma}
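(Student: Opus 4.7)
The plan is to interpret a point of the fiber $\pi_k^{-1}(f)$ as the additional datum of a rank-$k$ quotient bundle $E_{k+1} \onto E_k$ of degree $d_k$, compatible at $[1:0]$ with the standard flag. Such a surjection is determined by its kernel $K$, which is a line sub-bundle of $E_{k+1}$. By additivity of degrees in the short exact sequence $0 \to K \to E_{k+1} \to E_k \to 0$, I get $\deg K = d_{k+1} - d_k$ (which is negative by the strict monotonicity hypothesis, with the convention $d_{n+1} = 0$). Since every line bundle on $\PP^1$ is determined by its degree, $K \cong \OO_{\PP^1}(d_{k+1} - d_k)$, and the fiber is parametrized by line sub-bundles of $E_{k+1}$ of this isomorphism type.

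Next I convert sub-bundles into sections. A sub-bundle $K \hookrightarrow E_{k+1}$ with $K \cong \OO_{\PP^1}(d_{k+1} - d_k)$ is a nonzero morphism $\OO_{\PP^1}(d_{k+1} - d_k) \to E_{k+1}$ whose cokernel is locally free, modulo the action of $\Aut(\OO_{\PP^1}(d_{k+1} - d_k)) = \kk^\ast$. By $\Hom$-$\otimes$ adjunction this is the same as a section of $E_{k+1}(d_k - d_{k+1})$, and the locally-free-cokernel condition translates exactly to the section being nowhere vanishing. Thus, ignoring the basepoint condition, the fiber is naturally identified with $N(E_{k+1}(d_k - d_{k+1}))$.

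Finally I incorporate the basepoint condition. The requirement $f([1:0]) = \phi_k$ says that at $[1:0]$ the kernel $K|_{[1:0]} \subset E_{k+1}|_{[1:0]}$ equals the standard line $\spn(v_{k+1})$, since in the standard partial flag the map $\kk^{n+1}/\spn(e_{k+2},\dots,e_{n+1}) \onto \kk^{n+1}/\spn(e_{k+1},\dots,e_{n+1})$ has kernel $\spn(v_{k+1})$. Using the canonical identification of $\PP(E_{k+1}(d_k - d_{k+1})|_{[1:0]})$ with $\PP(E_{k+1}|_{[1:0]})$ (twisting by a line bundle does not affect projectivization of a fiber), this becomes $[\sigma([1:0])] = v_{k+1}$, cutting out exactly $N_{v_{k+1}}(E_{k+1}(d_k - d_{k+1}))$ inside $N(E_{k+1}(d_k - d_{k+1}))$.

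The one remaining point, which I expect to be the main (albeit routine) obstacle, is to check that this bijection upgrades to an isomorphism of varieties rather than a mere identification of closed points. This amounts to verifying that the constructions involved --- taking the kernel of a surjection of bundles, the $\Hom$-$\otimes$ adjunction, the nowhere-vanishing open condition, and evaluation at the basepoint --- are all functorial in families over an arbitrary base scheme. Equivalently, one may realize both sides as open subvarieties of projective bundles (of sections) cut out by the explicit conditions above and match them on the universal level.
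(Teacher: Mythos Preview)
Your proposal is correct and follows essentially the same approach as the paper: identify the fiber with rank-$k$ quotients $E_{k+1}\onto E_k$, pass to the kernel line subbundle, twist to convert subbundles into nowhere-vanishing sections up to scalars, and then impose the basepoint condition to cut out $N_{v_{k+1}}$. Your final paragraph on upgrading the pointwise bijection to an isomorphism of varieties is a point the paper leaves implicit.
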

\begin{proof}
  By the universal property, $f$ corresponds to a partial flag
  \[
    \OO^{n+1}_{\PP^1} \onto E_n \onto E_{n-1} \onto \dots \onto E_{k+1}
  \]
  where $E_i = f^{\ast}(\calE_i)$, and any $g \in \pi_k^{-1}(f)$ corresponds to a partial flag
  \[
    \OO^{n+1}_{\PP^1} \onto E_n \onto E_{n-1} \onto \dots \onto E_{k+1} \onto E_k
  \]
  which refines $f$ (i.e., which agrees with $f$ up to $\onto E_{k+1}$).

  Hence, given $f$, specifying a choice of $g \in \pi_k^{-1}(f)$ is equivalent to specifying a vector bundle quotient $E_{k+1} \onto E_k$ such that
  \begin{enumerate}[label=(\roman*)]
    \item $E_k$ has rank $k$;
    \item the fiber $E_k|_{[1:0]}$ is the standard quotient $\kk^{n+1} / \spn(e_{k+1},\dots,e_{n+1})$; and
    \item $\deg(E_k) = d_k$.
  \end{enumerate}
  Specifying such a quotient is equivalent to specifying its kernel $R = \ker(E_{k+1} \onto E_k)$; i.e., to a choice of rank $1$ subbundle $R \subseteq E_{k+1}$.
  The equivalent requirements on $R$ are that $R|_{[1:0]} = \spn(v_{k+1})$ (where, as in the statement, $v_{k+1}$ is the image of the standard basis vector $e_{k+1}$ in the standard quotient), and $\deg(R) = \deg(E_{k+1}-E_k) = d_{k+1}-d_k$.

Such subbundles $R$ correspond to vector bundle embeddings
$\OO_{\PP^1} (d_{k+1}-d_k) \to E_{k+1}$, up to multiplication by
scalars $\kk^\times \cong \Aut(\OO_{\PP^1} (d_{k+1}-d_k))$.  These are
further equivalent to embeddings $\OO_{\PP^1} \to
E_{k+1}(d_k-d_{k+1})$ (up to scalars); that is, to nowhere vanishing
sections of $E_{k+1}(d_k-d_{k+1})$ (up to scalars).  Finally, the
condition over $[1:0]$ exactly corresponds to the statement that this
section lies in $N_{v_{k+1}}(E_{k+1}(d_k-d_{k+1}))$.
\end{proof}

\begin{proposition}\label{prop: key prop giving [N(E)]}
  Suppose that $\phi \colon E \to \PP^1$ is a vector bundle of rank $r$ and degree $d$ on
  $\PP^{1}$, and suppose that $H^{1}(E(-2))=0$; equivalently, that $E
  \cong \bigoplus_{i=1}^r \OO_{\PP^{1}}(a_i)$ where $a_i \ge 1$. 
  
  Then for any basepoint $p \in \PP(\phi^{-1}([1:0])) \cong \PP^{r-1}$,
  the class of $N_p(E)$ in $K_{0}(\Var_{\kk})$ is given by
  \[
    [N_p(E)] = \LL^{d-r+1} (\LL^{r-1}-1)
  \]
  where $\LL=\left[\AA^{1}_{\kk} \right]$.
  In particular, $[N_p(E)]$ is independent of the splitting type of $E$.
\end{proposition}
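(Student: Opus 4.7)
The plan is: compute $[\Sigma_p(E)]$ directly; stratify $\Sigma_p(E)$ by the zero divisor of a section so that each stratum is $\AA^m \times N_p(E(-m))$; apply the resulting identity to both $E$ and $E(-1)$; and telescope. The whole argument needs the evaluation $\mathrm{ev}\colon H^0(F) \to F|_{[1:0]}$ to be surjective for $F = E$ and $F = E(-1)$, which amounts to $H^1(E(-1)) = 0$ and $H^1(E(-2)) = 0$ respectively; the hypothesis $a_i \ge 1$ is exactly the stronger of the two and implies both.

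For the direct computation of $[\Sigma_p(E)]$, once $\mathrm{ev}\colon H^0(E) \to E|_{[1:0]} \cong \kk^r$ is known to be surjective, pick a nonzero lift $\tilde p \in E|_{[1:0]}$ of $p$. The preimage $V = \mathrm{ev}^{-1}(\kk \cdot \tilde p)$ has dimension $d+1$ in the $(d+r)$-dimensional space $H^0(E)$, and choosing $\sigma_0 \in V$ with $\mathrm{ev}(\sigma_0) = \tilde p$ gives a splitting $V = \kk \sigma_0 \oplus \ker(\mathrm{ev})$. Quotienting $V \setminus \ker(\mathrm{ev})$ by $\kk^\ast$ then identifies $\Sigma_p(E) \cong \ker(\mathrm{ev}) \cong \AA^d$.

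For the stratification, a based section $\sigma$ has a zero divisor $D_\sigma \subseteq \PP^1$ of some degree $m = m(\sigma) \ge 0$, necessarily avoiding $[1:0]$, and factors uniquely (up to scalar) as $\sigma = s_{D_\sigma} \cdot \tau$ with $s_{D_\sigma} \in H^0(\OO_{\PP^1}(m))$ cutting out $D_\sigma$ and $\tau \in H^0(E(-m))$ nowhere vanishing. Using the canonical identification $\PP(E|_{[1:0]}) = \PP(E(-m)|_{[1:0]})$, the condition $[\sigma([1:0])] = p$ becomes $\tau \in N_p(E(-m))$. The saturation map $\sigma \mapsto (D_\sigma, [\tau])$ then gives an isomorphism of locally closed subvarieties
\[
  \Sigma_p^{(m)}(E) \;:=\; \{\sigma : m(\sigma) = m\} \;\xrightarrow{\;\sim\;}\; \Sym^m(\AA^1) \times N_p(E(-m)) \;=\; \AA^m \times N_p(E(-m)),
\]
which in $K_0(\Var_\kk)$ sums to the finite identity
\[
  \LL^d \;=\; [\Sigma_p(E)] \;=\; \sum_{m \ge 0} \LL^m\, [N_p(E(-m))].
\]

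Applying the same argument to $E(-1)$ (allowed because $a_i - 1 \ge 0$ under our hypothesis) yields
\[
  \LL^{d-r} \;=\; \sum_{m \ge 0} \LL^m\, [N_p(E(-m-1))] \;=\; \sum_{m \ge 1} \LL^{m-1}\, [N_p(E(-m))].
\]
Multiplying by $\LL$ and subtracting from the previous identity cancels every term on the right except $[N_p(E)]$, leaving $[N_p(E)] = \LL^d - \LL^{d-r+1} = \LL^{d-r+1}(\LL^{r-1}-1)$, which is visibly independent of the splitting type. The main obstacle is making the stratification rigorous at the level of varieties rather than $\kk$-points: one must check that $\sigma \mapsto (D_\sigma, [\tau])$ is a morphism, that $\Sigma_p^{(m)}(E)$ is locally closed, and that the identification with $\AA^m \times N_p(E(-m))$ holds as varieties, so that additivity in $K_0(\Var_\kk)$ applies. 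This should be a technical but standard verification via a family version of the saturation construction using the universal property of $\Sym^m(\PP^1)$.
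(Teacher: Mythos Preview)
Your proposal is correct and follows essentially the same approach as the paper: compute $[\Sigma_p(E)]$ and $[\Sigma_p(E(-1))]$ directly, stratify $\Sigma_p(F)$ by the degree of the common zero locus so that each stratum is $\AA^m \times N_p(F(-m))$, and telescope. The only cosmetic difference is that the paper works in explicit polynomial coordinates (writing $H^0(F)$ as tuples of homogeneous polynomials and using $\gcd$), which makes the algebraicity of the saturation map $[s_1:\dotsb:s_r] \mapsto ([s_1/g:\dotsb:s_r/g],[g])$ immediate and sidesteps the family-version verification you flag at the end.
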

The special case where $E = \OO_{\PP^{1}}(a)^r$ for $a \ge 1$ (i.e.,
all rank $1$ summands of $E$ have the same degree), $p=[1:\dots:1]$,
and $\kk$ has characteristic $0$, is implied
by~\cite[Theorem~1.2]{Farb-Wolfson-2016}, and there are many
similarities between the proofs.
\begin{proof}
  For any rank $r$ bundle $F\cong\oplus_{i=1}^{r}\OO_{\PP^1} (a_{i})$ with $a_i \ge 0$,
  note $H^0(F)$ may be identified with tuples $(s_1,\dots,s_r)$ of homogeneous polynomials $s_i(x_0,x_1)$
  of degree $a_i$.
  It follows that $\Sigma(F) \cong \PP^{\deg(F) + r-1}$.
  Moreover, $s = [s_1 : \dotsc: s_r] \in \Sigma(F)$ lies in $\Sigma_p(F)$ if and only if the leading coefficients $s_i(1,0)$ satisfy
  $(s_1(1,0),\dotsc,s_r(1,0))\neq 0$ and $[s_1(1,0):\dotsc:s_r(1,0)] = p$.
  Hence $\Sigma_p$ lies inside $\PP^{\deg(F) + r - 1}$ as the intersection of $r-1$ independent hyperplanes and one independent hyperplane complement; therefore,
  \begin{equation}
    \label{eq:sigma-class}
    \Sigma_p(F) \cong \AA^{\deg(F)}.
  \end{equation}

  For any $k \ge 0$, let
  $V_{k}(F)\subset \Sigma(F)$ be the (possibly empty) subset
  \[
    V_{k}(F) = \bigl\{[s_{1}:\dotsb :s_{r}]: \text{ $\gcd
    (s_{1},\dotsc ,s_{r})$ has degree $k$} \bigr\}.
  \]
  Recalling that $N(F)\subset \PP (H^{0}(F))$ is the (unbased) locus of nowhere
  vanishing sections, we have an isomorphism
  \[
    V_{k}(F) \cong N(F(-k))\times \PP (H^{0}(\OO_{\PP^1} (k)))
  \]
  given by 
  \[
    [s_{1}:\dotsb :s_{r}]\mapsto \left(\left[\frac{s_{1}}{g}:\dotsb
    :\frac{s_{r}}{g} \right],[g] \right)
  \]
  where $g=\gcd (s_{1},\dotsc ,s_{r})$ is well defined up to
  multiplication by a scalar.

  For $\sigma = [s_1:\dotsb:s_r]$ and $g$ as above,
  notice that $\sigma([1:0]) = p$ if and only if 
  \[
    g(1,0) \ne 0 \qquad \text{and} \qquad \left[ \frac{s_1}{g}(1,0) : \dotsb : \frac{s_r}{g}(1,0) \right] = p.
  \]
  Hence, the above isomorphism restricts to an isomorphism
  \[
    V_k(F) \cap \Sigma_p \cong N_p(F(-k)) \times \AA^{k}.
  \]

  Moreover, $\Sigma(F)$ (respectively, $\Sigma_p(F)$) admits a locally closed stratification
  \begin{align*}
    \Sigma(F) &= \bigcup_{k\geq 0} V_{k}(F) = \bigcup_{k \geq 0} N(F(-k)) \times \PP^{k}, \\
    \Sigma_p(F) &= \bigcup_{k \ge 0} V_k(F) \cap \Sigma_p(F) 
    = \bigcup_{k \ge 0} N_p(F(-k)) \times \AA^{k}.
  \end{align*}
  Taking classes in $K_0(\Var_\kk)$,
  \[
    [\Sigma_p(F)] = \sum_{k \ge 0} [N_p(F(-k))] \LL^k.
  \]
  Applying this with $F=E$ and $F=E(-1)$ gives
  \[
    [\Sigma_p(E)] - \LL [\Sigma_p(E(-1))] = [N_p(E)].
  \]
  Note that, by hypothesis, $E$ and $E(-1)$ have all their splitting summands of non-negative degree, so
  by~\eqref{eq:sigma-class} we get
  \[
    [\Sigma_p(E)] = \LL^d,\qquad [\Sigma_p(E(-1))] = \LL^{d-r}
  \]
  and so
  \[
    [N_p(E)] = \LL^d - \LL^{d-r+1} = \LL^{d-r+1} (\LL^{r-1} - 1)
  \]
  as claimed.
\end{proof}

\begin{remark}
  If $F$ has a summand of degree $<0$, the space $\Sigma_p(F)$ is either $\AA^r$ for some explicit $r$ or empty, depending on whether the corresponding coordinate of $p$ is zero or non-zero.
  Hence, if $E$ is only required to have all splitting degrees $a_i \ge 0$, the classes $[\Sigma_p(E(-1))]$ and $[N_p(E)]$ can be computed as above, 
  but depend non-trivially on the interaction of $p$ with the splitting of $E$.
\end{remark}

\begin{corollary}%
  \label{cor:fiber-type}
  If $d_{n}<d_{n-1}<\dotsb <d_{1}$, then for all $f \in \Omega^2_{d_n,\dots,d_{k+1}}(\Flnk{k+1})$, the fibers $\pi_{k}^{-1}(f)$ have class $[\pi_k^{-1}(f)] = \LL^{(k+1) d_k - k d_{k+1} - k} (\LL^k - 1)$ in $K_0(\Var_\kk)$, independent of $f$.
\end{corollary}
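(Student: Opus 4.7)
The plan is to combine Lemma~\ref{lem: fibers of Map(Fl(n+1,k))-->Map(Fl(n+1,k+1))} with Proposition~\ref{prop: key prop giving [N(E)]}, so the proof should be a short computation with one real verification. By the lemma, for any $f \in \Omega^2_{d_n,\dots,d_{k+1}}(\Flnk{k+1})$ corresponding to a partial flag $\OO_{\PP^1}^{n+1} \onto E_n \onto \dotsb \onto E_{k+1}$, we have $\pi_k^{-1}(f) \cong N_{v_{k+1}}(E_{k+1}(d_k - d_{k+1}))$. So I would apply Proposition~\ref{prop: key prop giving [N(E)]} to the bundle $E := E_{k+1}(d_k - d_{k+1})$, which has rank $r = k+1$ and degree $d = d_{k+1} + (k+1)(d_k - d_{k+1}) = (k+1)d_k - k d_{k+1}$.

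The only thing to check is the hypothesis of the proposition, namely that every summand of $E$ in its splitting type has degree $\ge 1$. Here I would use the fact that $E_{k+1}$ appears as a quotient of $\OO_{\PP^1}^{n+1}$, hence is globally generated; for a vector bundle on $\PP^1$ this forces its splitting type $E_{k+1} \cong \bigoplus_{i=1}^{k+1} \OO_{\PP^1}(a_i)$ to satisfy $a_i \ge 0$. After twisting by $d_k - d_{k+1}$, the summands become $\OO_{\PP^1}(a_i + d_k - d_{k+1})$, and strict monotonicity $d_k > d_{k+1}$ gives $d_k - d_{k+1} \ge 1$, so all summand degrees are $\ge 1$ as required. (In the boundary case $k = n$, where $d_{k+1} = d_{n+1} = 0$ by convention, this step additionally uses $d_n \ge 1$, which is part of the strict monotonicity assumption of Theorem~\ref{thm: main result}.)

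With the hypothesis verified, the proposition yields
\[
  [\pi_k^{-1}(f)] = [N_{v_{k+1}}(E)] = \LL^{d-r+1}(\LL^{r-1} - 1) = \LL^{(k+1)d_k - k d_{k+1} - k}(\LL^k - 1),
\]
which is the stated formula. Independence of $f$ is then inherited directly from the proposition's assertion that $[N_p(E)]$ depends only on the rank and degree of $E$ (and not on the splitting type), since different choices of $f$ can a priori give different splitting types for $E_{k+1}$ but always the same rank and degree. There is no real obstacle here; the content of the corollary lies entirely in the two results it combines, and writing it out is essentially bookkeeping with ranks and degrees.
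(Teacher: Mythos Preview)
Your proof is correct and follows essentially the same route as the paper's: identify the fiber via Lemma~\ref{lem: fibers of Map(Fl(n+1,k))-->Map(Fl(n+1,k+1))}, observe that $E_{k+1}$ is a quotient of $\OO_{\PP^1}^{n+1}$ and hence has non-negative splitting type, twist by $d_k-d_{k+1}\ge 1$ to get strictly positive summands, and then apply Proposition~\ref{prop: key prop giving [N(E)]} with $r=k+1$ and $d=(k+1)d_k-kd_{k+1}$. Your remark about the boundary case $k=n$ needing $d_n\ge 1$ is a nice bit of extra care that the paper's proof leaves implicit.
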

\begin{proof}
  Recall $f \in \Omega^2_{d_n,\dotsc,d_k}(\Flnk{k+1})$ corresponds to a flag
  \[
    \OO^{n+1}_{\PP^1} \onto E_n \onto E_{n-1} \onto \dots \onto E_{k+1}
  \]
  where $E_{i} = f^\ast(\calE_{i})$.
  Since $E_{k+1}$ is a quotient of $\OO_{\PP^1}^{\oplus (n+1)}$, it has no
  negative summands in its splitting type.
  Then by the strict monotonicity
  assumption, $d_{k}-d_{k+1}>0$ and so $E_{k+1}(d_{k}-d_{k+1})$ has all
  strictly positive summands.
  Moreover, $E_{k+1}(d_k - d_{k+1})$ has rank $k+1$ and degree $d_{k+1} + (k+1) (d_k - d_{k+1})$.
  Applying Lemma~\ref{lem: fibers of Map(Fl(n+1,k))-->Map(Fl(n+1,k+1))} and Proposition~\ref{prop: key prop giving [N(E)]} gives
  \[
    [\pi_k^{-1}(f)] = \LL^{(k+1) d_k - k d_{k+1} - k} (\LL^k - 1)
  \]
  as required.
\end{proof}

If $\pi_k$ were a Zariski locally trivial fibration, we could immediately relate 
the classes $[\Omega^2_{d_n,\dots,d_{k+1}}(\Flnk{k+1})]$ and $[\Omega^2_{d_n,\dots,d_k}(\Flnk{k})]$.
However it is not: the fibers need not even be isomorphic as varieties, as shown in Example~\ref{ex:fibers-different} below.

Given this, we introduce the following terminology.

\begin{definition}\label{defn: motivically trivial fibration}
  We say that a map $\pi :Y\to X$ is a \emph{motivically trivial fibration} if
there is a locally closed stratification of $X=\cup_{i} Z_{i}$ such that
$\pi^{-1}(Z_{i})\to Z_{i}$ is a Zariski locally trivial fiber bundle
with fibers $F_{i}$ such that $[F_{i}]=[F]\in K_{0}(\Var_{\kk})$ is
independent of $i$. We note that the equation $[Y]=[X]\cdot [F]$ then
holds in $K_{0}(\Var_{\kk})$. 
\end{definition}

\begin{proposition}\label{prop: Map(Fl(n+1,k))-->Map(Fl(n+1,k+1)) is motivically trivial}
  For $d_{n}<d_{n-1}<\dotsb <d_{1}$ and $1 \le k \le n$,
  the map $\pi_k$ is a motivically trivial fibration with fiber class $\LL^{(k+1) d_k - k d_{k+1} - k} (\LL^k - 1)$.
\end{proposition}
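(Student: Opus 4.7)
The plan is to stratify the base $\Omega^2_{d_n,\dots,d_{k+1}}(\Flnk{k+1})$ by the splitting type of the universal bundle $E_{k+1}$, and then show Zariski local triviality of $\pi_k$ on each stratum. Combined with Corollary~\ref{cor:fiber-type}, this immediately gives the motivically trivial fibration structure. The key input that makes this strategy work is that, although fibers of $\pi_k$ over points with different splitting types of $E_{k+1}$ need not be isomorphic as varieties, Proposition~\ref{prop: key prop giving [N(E)]} already guarantees they have the same class in $K_0(\Var_\kk)$.

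First, for each non-increasing tuple $\mu = (a_1, \dots, a_{k+1})$ of non-negative integers summing to $d_{k+1}$, let $Z_\mu \subseteq \Omega^2_{d_n, \dots, d_{k+1}}(\Flnk{k+1})$ be the locus where $E_{k+1}|_f \cong \bigoplus_i \OO_{\PP^1}(a_i)$. Standard upper semicontinuity of the fiberwise dimensions of $H^0(E_{k+1}(\ell))$, applied across the family over $\Omega^2_{d_n,\dots,d_{k+1}}(\Flnk{k+1}) \times \PP^1$, shows that the $Z_\mu$ form a finite locally closed stratification of the base.

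Second, on each stratum I would show that $E_{k+1}$ is Zariski locally isomorphic to the constant bundle $\bigoplus_i \OO_{\PP^1}(a_i)$ pulled back from $\PP^1$. Let $p \colon Z_\mu \times \PP^1 \to Z_\mu$ be the projection. Because the fiberwise dimensions of $H^0$ and $H^1$ of the internal $\mathcal{H}om$ bundle, which on a fiber is $\bigoplus_{i,j} \OO_{\PP^1}(a_i - a_j)$, are constant across $Z_\mu$, the pushforward $p_\ast \mathcal{H}om\bigl((\bigoplus_i\OO(a_i))_{Z_\mu\times\PP^1}, E_{k+1}|_{Z_\mu\times\PP^1}\bigr)$ is a vector bundle, and the fiberwise-isomorphism locus inside its total space is an open subscheme $P \to Z_\mu$ which is a torsor for the algebraic group $G = \Aut\bigl(\bigoplus_i \OO_{\PP^1}(a_i)\bigr)$. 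This $G$ is an extension of the reductive Levi $\prod_a \GL_{n_a(\mu)}$ (one factor per distinct value $a$ appearing in $\mu$, with multiplicity $n_a(\mu)$) by a unipotent radical coming from the Hom terms of strictly positive degree; hence $G$ is a \emph{special} algebraic group in the sense of Serre--Grothendieck, and every $G$-torsor is Zariski locally trivial.

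Trivializing $E_{k+1}$ on a Zariski open $U \subseteq Z_\mu$ via such a section, and invoking Lemma~\ref{lem: fibers of Map(Fl(n+1,k))-->Map(Fl(n+1,k+1))}, identifies $\pi_k^{-1}(U) \to U$ with the trivial bundle $U \times N_{v_{k+1}}\bigl(\bigoplus_i \OO_{\PP^1}(a_i + d_k - d_{k+1})\bigr) \to U$; by Corollary~\ref{cor:fiber-type} the fiber class is $\LL^{(k+1) d_k - k d_{k+1} - k}(\LL^k - 1)$ independently of $\mu$, which is exactly the conclusion of the proposition. The main obstacle is the Zariski-local triviality step: confirming specialness of $G$ and representability of the Isom-torsor (where cohomology-and-base-change must be applied carefully, exploiting constancy of splitting type on $Z_\mu$). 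Once that is in place, everything else reduces to combining the already-established Lemma~\ref{lem: fibers of Map(Fl(n+1,k))-->Map(Fl(n+1,k+1))} and Proposition~\ref{prop: key prop giving [N(E)]}.
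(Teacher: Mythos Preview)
Your strategy---stratify by the splitting type of $E_{k+1}$ and exploit that $\Aut(F_\mu)$ is a special group---is the paper's approach, and most of what you wrote is correct. But there is a genuine gap in the final step.

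When you trivialize $E_{k+1}|_{U\times\PP^1}\cong F_\mu$ via a section $\psi$ of the $\Aut(F_\mu)$-torsor, the identification of $\pi_k^{-1}(f)$ with $N_p(F_\mu(d_k-d_{k+1}))$ uses the basepoint $p=\psi_f(v_{k+1})\in F_\mu|_{[1:0]}$, and this basepoint \emph{varies with $f\in U$}. Your claimed product $U\times N_{v_{k+1}}(\dots)$ silently treats $v_{k+1}$ as a fixed point of the model fiber $F_\mu|_{[1:0]}$, which it is not. Concretely, the depth of $v_{k+1}$ in the Harder--Narasimhan filtration of $E_{k+1}|_{[1:0]}$ genuinely moves across $Z_\mu$: for $n=2$, $k=1$, $d_2=2$, on the $\OO(2)\oplus\OO$ stratum one can write down explicit based quotients $\OO^3\onto E_2$ for which $v_2$ lies in the $\OO(2)$-fiber at $[1:0]$, and others for which it does not. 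Since $\Aut(F_\mu)$ only acts transitively \emph{within} a depth stratum of $F_\mu|_{[1:0]}$, no choice of section $\psi$ can pin the basepoint down to a single reference vector across all of $U$, and the product structure is unjustified.

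The paper repairs exactly this: it refines your stratification by additionally recording the depth $\ell$ of $v_{k+1}$, and on each stratum $(\vecr,\ell)$ replaces the $\Aut(F_\vecr)$-torsor of all trivializations by the torsor of trivializations sending $v_{k+1}$ to a fixed reference vector $u_\ell$ of depth $\ell$. This is a torsor for the stabilizer $G_{\vecr,\ell}=\Stab_{\Aut(F_\vecr)}(u_\ell)$, which is again an extension of a product of $\GL$'s by a unipotent group, hence special. A local section of \emph{this} torsor fixes both the bundle and the basepoint simultaneously, so the product decomposition $\pi_k^{-1}(U)\cong U\times N_{u_\ell}(F_\vecr(d_k-d_{k+1}))$ is honest. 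Your argument is one ingredient short: add the depth stratification and pass from $\Aut(F_\vecr)$ to $G_{\vecr,\ell}$.
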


It remains to show that $\Omega^2_{d_n,\dots,d_{k+1}}(\Flnk{k+1})$ admits a locally closed stratification such that $\pi_k$ is Zariski locally trivial on strata.
We defer the proof of this to Section~\ref{sec:motivically-trivial}

\subsection{Finishing the proof of the Main Theorem}

Given Proposition~\ref{prop: Map(Fl(n+1,k))-->Map(Fl(n+1,k+1)) is motivically trivial}, and recalling $\Fl = \Flnk{1}$ and $\Flnk{n+1} = \pt$, by induction on $k$ we have
\begin{align}\label{eqn: first formula for [Map(P1,Fl)]}
\left[\Omega^2_{d_{n},\dotsc ,d_{1}}(\PP^{1},\Fl ) \right] &=
\prod_{k=1}^{n} \LL^{(k+1)d_{k}-kd_{k+1}-k} \cdot (\LL^{k}-1) \nonumber\\
&=\LL^{\alpha}\cdot \prod_{k=1}^{n}(\LL^{k}-1)
\end{align}
where 
\begin{align*}
  \alpha &= \sum_{k=1}^{n} \bigl((k+1)d_{k}-kd_{k+1}-k \bigr)\\
&=\sum_{k=1}^{n}(k+1)d_{k} - \sum_{k=2}^{n+1}(k-1)d_{k}
-\binom{n+1}{2} \\
&=\sum_{k=1}^{n}(k+1)d_{k} - \sum_{k=1}^{n}(k-1)d_{k}
-\binom{n+1}{2} \\
&= \sum_{k=1}^{n}2d_{k} -\binom{n+1}{2}\\
&=D-\binom{n+1}{2}.
\end{align*}
Note that we've used the convention $d_{n+1}=0$ to change the summation
limits from the second to the third line above.

We now note that 
\[
\left[\GL_{n} \right] =\LL^{\binom{n}{2}}\prod_{k=1}^{n}(\LL^{k}-1).
\]

We thus now see that equation~\eqref{eqn: first formula for
[Map(P1,Fl)]} becomes
\begin{align}\label{eqn: formula for [Map(P1,Fl)]}
[\Omega^2_{d_{n},\dotsc ,d_{1}}(\PP^{1},\Fl )] &= \LL^{D-\binom{n+1}{2}-\binom{n}{2}}\,  [\GL_{n}]\nonumber \\
&= \LL^{D-n^{2}}\, [\GL_{n}] \nonumber
\end{align}
as required.

\subsection{Motivically trivial fibrations}%
\label{sec:motivically-trivial}

We now prove Proposition~\ref{prop: Map(Fl(n+1,k))-->Map(Fl(n+1,k+1)) is motivically trivial}.

We first briefly describe the stratification of the base $\Omega^2_{d_n,\dots,d_{k+1}}(\Flnk{k+1})$ we will use.
Recall that $f \in \Omega^2_{d_n,\dots,d_{k+1}}(\Flnk{k+1})$ correspond to partial quotient flags of vector bundles
\[
  \OO_{\PP^1}^{n+1} \onto E_n \onto \dots \onto E_{k+1}.
\]
First, it will be necessary to partition such $f$ based on the isomorphism type of $E_{k+1}$ as a vector bundle: that is, on the integers $r_j \ge 0$ determining the splitting type $E_{k+1} \cong \bigoplus_{j \ge 0} \OO_{\PP^1}(j)^{r_j}$.
Second and perhaps more subtly, recall that there is a given choice of basepoint $v_{k+1} \in E_{k+1}|_{[1:0]}$ coming from the standard basis vector $e_{k+1} \in \kk^{n+1}$.
The splitting of $E_{k+1}$ as a vector bundle implies a canonical filtration of the vector space $E_{k+1}|_{[1:0]}$, and it is necessary to partition further based on how deep the basepoint $v_{k+1}$ lies in this filtration.

\vspace{\baselineskip}
We recall that an algebraic group $G$ is called \emph{special} if all principal $G$-bundles $P\to X$ are Zariski locally trivial. We further recall that (i) the group $G = \GL_m(\kk)$ is special, (ii) the additive group $G_a(\kk)$ is special, and (iii) if $1 \to K \to G \to H \to 1$ is a short exact sequence of algebraic groups and $K, H$ are special then $G$ is special. For proofs of these facts, as well as further background on special groups, see~\cite{Serre-Special-Groups,Reichstein-Tossici-Special-Groups}. 

\newcommand{\vecr}[0]{\mathbf{r}}
Given $\vecr=(r_0,r_1,r_2,\dots)$ a finitely supported sequence of non-negative integers, write
$F_{\vecr}$ for the vector bundle
\[
  F_{\vecr} = \bigoplus_{j \ge 0} \OO_{\PP^1}(j)^{r_j}
\]
on $\PP^1(\kk)$.
The automorphism group $G = \Aut(F_{\vecr})$ of $F_{\vecr}$ as a vector bundle is an extension of the (finite) product $\prod_{j \ge 0} \GL_{r_j}(\kk)$ by a unipotent group, and hence special.

The decomposition of $F_{\vecr}$ by degree gives a decomposition of the fibers:
\[
  F_{\vecr}|_{[1:0]} = \bigoplus_{j \ge 0} \kk^{r_j}.
\]
For a non-zero vector $u \in F_{\vecr}|_{[1:0]}$, define its \emph{depth} $\delta(u)$ to be the largest integer $\ell \ge 0$ such that $u \in \bigoplus_{j \ge \ell} \kk^{r_j}$.
Moreover, for any $\ell \ge 0$ with $r_\ell > 0$, define a reference vector $u_\ell$ with depth $\delta(u) = \ell$,
namely
\[
  u_\ell = (0,0,\dots,(1,1,\dots,1), 0, 0, \dots) \in \bigoplus_{j \ge 0} \kk^{r_j};
\]
i.e., $u_\ell$ consists of the all-ones vector in the $\kk^{r_\ell}$ summand and zeros elsewhere.

The action of the group $\Aut(F_{\vecr})$ on the fiber $F_{\vecr}|_{[1:0]}$ preserves the depth filtration $\delta(-)$, and moreover acts transitively on vectors of a given depth.
Write $G_{\vecr,\ell} \le \Aut(F_{\vecr})$ for the subgroup stabilizing the reference vector $u_\ell \in F_{\vecr}|_{[1:0]}$.
Similarly to $\Aut(F_{\vecr})$, the subgroup $G_{\vecr,\ell}$ is an extension of a product of general linear groups by a unipotent group, hence special.

As discussed above, $f \in \Omega^2_{d_n,\dots,d_{k+1}}(\Flnk{k+1})$ correspond to partial quotient flags of vector bundles
\[
  \OO_{\PP^1}^{n+1} \onto E_n \onto \dots \onto E_{k+1}
\]
whose fiber over $[1:0]$ is the standard partial quotient flag.  The
vector bundle $E_{k+1}$ is a quotient of $\OO_{\PP^1}^{n+1}$ and so
has all its splitting summands of non-negative degree; i.e., $E_{k+1}$
is isomorphic to some $F_{\vecr}$ with $\sum_{j \ge 0} j\, r_j =
d_{k+1}$.  For a fixed $d_{k+1}$, write $I$ for the (finite) set of
sequences $\vecr$ obeying this condition.

Given $\vecr \in I$ and $\ell \ge 0$ (with $r_\ell>0$),
write
\[
  \Omega^2_{d_n,\dots,d_{k+1};\vecr,\ell}(\Flnk{k+1}) \subseteq \Omega^2_{d_n,\dots,d_{k+1}}(\Flnk{k+1})
\]
for the subvariety consisting of those flags $f$ such that
\begin{enumerate}[label=(\roman*)]
  \item $E_{k+1} \cong F_{\vecr}$; and
  \item under any isomorphism $\psi \colon E_{k+1} \xrightarrow{\cong} F_{\vecr}$, the basepoint $v_{k+1} \in E_{k+1}|_{[1:0]}$ is mapped to an element $\psi(v_{k+1}) \in F_{\vecr}|_{[1:0]}$ with depth $\delta(\psi(v_{k+1})) = \ell$.
\end{enumerate}
(See Lemma~\ref{lem: fibers of Map(Fl(n+1,k))-->Map(Fl(n+1,k+1))} for the definition of $v_{k+1}$.)
Note that $\Aut(F_{\vecr})$ preserves the depth filtration on $F_{\vecr}|_{[1:0]}$, so condition (ii) is independent of the choice of $\psi$.

Ranging over all choices of $\vecr$ and $\ell$ (with $r_\ell > 0$), the subvarieties $\Omega^2_{d_n,\dots,d_{k+1};\vecr,\ell}(\Flnk{k+1})$ give a locally closed stratification of $\Omega^2_{d_n,\dots,d_{k+1}}(\Flnk{k+1})$ (using Birkhoff--Grothendieck).
It therefore suffices to show the following.
\begin{lemma}\label{lem:zlt}
  For all $\vecr \in I$ and all $\ell \ge 0$ with $r_\ell > 0$, the restriction of the map $\pi_k \colon \Omega^2_{d_n,\dots,d_{k}}(\Flnk{k}) \to \Omega^2_{d_n,\dots,d_{k+1}}(\Flnk{k+1})$ to $\Omega^2_{d_n,\dots,d_k; \vecr,\ell}(\Flnk{k+1})$ is Zariski locally trivial, with fiber $N_{u_\ell}(F_{\vecr}(d_k-d_{k+1}))$.
\end{lemma}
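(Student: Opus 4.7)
The approach is a standard associated-bundle argument, exploiting the fact, already noted in the text, that the stabilizer $G_{\vecr,\ell} \le \Aut(F_{\vecr})$ is a special algebraic group. Write $X = \Omega^2_{d_n,\dots,d_{k+1};\vecr,\ell}(\Flnk{k+1})$ for the stratum in question, and let $\calE_{k+1}$ denote the universal rank-$(k+1)$ quotient bundle on $X \times \PP^1$.

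First, I would construct an auxiliary space $P \to X$ whose fiber over $f \in X$ is the set of vector bundle isomorphisms $\psi \colon E_{k+1}(f) \xrightarrow{\sim} F_{\vecr}$ that send the basepoint $v_{k+1} \in E_{k+1}(f)|_{[1:0]}$ to the reference vector $u_\ell \in F_{\vecr}|_{[1:0]}$. Algebraically, $P$ is cut out inside the Isom-scheme $\Isom_X(\calE_{k+1}, F_{\vecr} \boxtimes \OO_X)$ by the locally closed condition that the induced isomorphism of fibers over $[1:0]$ carries the section $v_{k+1}$ to $u_\ell$. By the defining conditions (i) and (ii) of the stratum, $P \to X$ is surjective, and the group $G_{\vecr,\ell}$ acts on $P$ by post-composition making $P$ into a $G_{\vecr,\ell}$-torsor over $X$.

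Second, since $G_{\vecr,\ell}$ is special, the torsor $P \to X$ is Zariski locally trivial: every point of $X$ has a Zariski open neighborhood $U \subseteq X$ admitting a section $\sigma \colon U \to P$, i.e., a choice of isomorphism $\psi_u \colon E_{k+1}(u) \xrightarrow{\sim} F_{\vecr}$ with $\psi_u(v_{k+1}) = u_\ell$, depending algebraically on $u \in U$.

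Third, I would use $\sigma$ to trivialize $\pi_k$ over $U$. By Lemma~\ref{lem: fibers of Map(Fl(n+1,k))-->Map(Fl(n+1,k+1))}, the fiber of $\pi_k$ over $f \in X$ is $N_{v_{k+1}}(E_{k+1}(f)(d_k-d_{k+1}))$. Tensoring $\psi_u$ with $\OO_{\PP^1}(d_k-d_{k+1})$ gives an isomorphism of vector bundles that induces a bijection
\[
  N_{v_{k+1}}(E_{k+1}(u)(d_k-d_{k+1})) \xrightarrow{\sim} N_{u_\ell}(F_{\vecr}(d_k-d_{k+1})),
\]
and varying $u \in U$ assembles these into a trivialization $\pi_k^{-1}(U) \cong U \times N_{u_\ell}(F_{\vecr}(d_k-d_{k+1}))$ as required.

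The main obstacle is the algebro-geometric bookkeeping in step one: one must check that $P$ exists as a scheme (via the Isom-scheme of two vector bundles in a flat family), that the basepoint condition cuts out a locally closed subscheme whose action by $G_{\vecr,\ell}$ is free and transitive on fibers, and that the resulting object is genuinely a $G_{\vecr,\ell}$-torsor rather than merely a fibration with the right fibers. Once this is in place, the appeal to specialness and the fiberwise identification are immediate.
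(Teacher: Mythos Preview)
Your proposal is correct and follows essentially the same approach as the paper: the auxiliary space $P$ is exactly what the paper calls the space of \emph{augmented partial quotient flags} $\calA_{d_n,\dots,d_{k+1};\vecr,\ell}$, and the argument proceeds identically via specialness of $G_{\vecr,\ell}$ and local sections. The only cosmetic difference is that the paper phrases the trivialization as composing the augmentation $E_{k+1}\xrightarrow{\cong}F_{\vecr}$ with a quotient $F_{\vecr}\onto E_k$ coming from a point of $N_{u_\ell}(F_{\vecr}(d_k-d_{k+1}))$, whereas you phrase it as transporting $N_{v_{k+1}}$ to $N_{u_\ell}$ via $\psi_u$; these are the same map read in two directions.
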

Here, $N_{u_\ell}(F_{\vecr}(d_k-d_{k+1}))$ is as defined in Definition~\ref{defn: N(F)}.
\begin{proof}[Proof of Lemma~\ref{lem:zlt}]
  Define the space of \emph{augmented partial quotient flags}
  $\calA_{d_n,\dots,d_{k+1};\vecr,\ell}$
  parameterizing pairs $(f,\psi)$ where as usual $f \in \Omega^2_{d_n,\dots,d_{k+1};\vecr,\ell}(\Flnk{k+1})$ is a flag
  \[
    \OO_{\PP^1}^{n+1} \onto E_n \onto \dots \onto E_{k+1}
  \]
  and $\psi \colon E_{k+1} \xrightarrow{\cong} F_{\vecr}$ is a choice of isomorphism
  such that $\psi$ maps the basepoint $v_{k+1} \in E_{k+1}|_{[1:0]}$ to the standard depth $\ell$ vector $u_\ell \in F_{\vecr}|_{[1:0]}$.
  Hence $(f,\psi)$ parameterize diagrams
  \[
    \OO_{\PP^1}^{n+1} \onto E_n \onto \dots \onto E_{k+1} \xrightarrow{\cong} F_{\vecr}.
  \]

  For any flag $f\in \Omega^2_{d_n,\dots,d_{k+1}; \vecr,\ell}(\Flnk{k+1})$, such an isomorphism $\psi$ always exists. Indeed, some isomorphism $E_{k+1} \xrightarrow{\cong} F_{\vecr}$ exists by definition of $\Omega^2_{d_n,\dots,d_{k+1}; \vecr,\ell}(\Flnk{k+1})$, and maps $v_{k+1}$ to some vector of depth $\ell$.  Since $\Aut(F_{\vecr})$ acts transitively on vectors in $F_{\vecr}|_{[1:0]}$ of a given depth, we can compose $E_{k+1} \xrightarrow{\cong} F_{\vecr}$ with an element of $\Aut(F_{\vecr})$ to obtain another isomorphism $E_{k+1} \xrightarrow{\cong} F_{\vecr}$ mapping $v_{k+1} \mapsto u_\ell$.

  There is a natural morphism $\tau_{\vecr,\ell} \colon \calA_{d_n,\dots,d_{k+1}; \vecr,\ell} \to \Omega^2_{d_n,\dots,d_{k+1}; \vecr,\ell}(\Flnk{k+1})$ by forgetting the choice of isomorphism $\psi$.
  By the above remarks, the morphism $\tau_{\vecr,\ell}$ is surjective; it is also flat and locally finitely presented.

  The stabilizer group $G_{\vecr,\ell} \le \Aut(F_{\vecr})$ discussed above acts on $\calA_{d_n,\dots,d_{k+1}; \vecr,\ell}$ by composition of the isomorphisms $\psi \colon E_{k+1} \to F_{\vecr}$ and $F_{\vecr} \to F_{\vecr}$.
  Moreover, this action is simply transitive on the fibers of $\tau_{\vecr,\ell}$, making $\calA_{d_n,\dots,d_{k+1}; \vecr,\ell}$ a principal $G_{\vecr,\ell}$-bundle over $\Omega_{d_n,\dots,d_{k+1}; \vecr,\ell}^2(\Flnk{k+1})$.
  Since $G_{\vecr,\ell}$ is special, this bundle is Zariski locally trivial.

  Let $\{U_{\alpha}\}$ be a Zariski open cover of $\Omega^2_{d_n,\dots,d_{k+1}; \vecr,\ell}(\Flnk{k+1})$ such that the restriction $\tau_{\vecr,\ell}^{-1}(U_{\alpha}) \to U_{\alpha}$ is a trivial $G_{\vecr,\ell}$-bundle for each $\alpha$, and write
  $\sigma_{\alpha} \colon U_{\alpha} \to \tau_{\vecr,\ell}^{-1}(U_{\alpha})$ for the corresponding section.
  In other words, as $f$ ranges over $U_{\alpha}$, $\sigma_{\alpha}(f)$ gives a systematic choice of augmentation
  \[
    \OO_{\PP^1}^{n+1} \onto E_n \onto \dots \onto E_{k+1} \xrightarrow{\cong} F_{\vecr}
  \]
  with the given basepoint behavior over $[1:0]$.

  Finally, recall that every $h \in N_{u_\ell}(F_{\vecr}(d_k-d_{k+1}))$ corresponds to a quotient $F_{\vecr} \onto E_k$ with $\deg(E_k) = d_{k+1}$ and $E_k|_{[1:0]} \cong F_{\vecr}|_{[1:0]} / \spn(u_\ell)$.
  Hence, we may construct an isomorphism $U_{\alpha} \times N_{u_\ell}(F_{\vecr}(d_k-d_{k+1})) \to \pi_k^{-1}(U_{\alpha})$ by composing the choice of augmentation $\sigma_\alpha(f)$ with this quotient map $F_{\vecr} \onto E_k$:
  \begin{align*}
    \bigl( &\OO_{\PP^1}^{n+1} \onto E_n \onto \dots \onto E_{k+1} \xrightarrow{\cong} F_{\vecr} \bigr),\ \ 
    \bigl( F_{\vecr} \to E_k \bigr) \\
    \mapsto
    \bigl( &\OO_{\PP^1}^{n+1} \onto E_n \onto \dots \onto E_{k+1} \to E_k \bigr).
  \end{align*}
  By construction, for $f \in U_\alpha$, the resulting partial flag lies in $\pi_k^{-1}(f)$. 
  This gives a local trivialization of $\pi_k$ on the stratum $\Omega^2_{d_n,\dots,d_{k+1}; \vecr,\ell}(\Flnk{k+1})$ as required.
\end{proof}

\subsection{Negative examples}%
\label{subsec:negative-examples}

We briefly outline some examples where key equalities above in terms of motivic class cannot be upgraded to analogous statements for isomorphism class or homotopy type.

We first show that the variety $\Omega^2_{d_n,\dotsc,d_1}(\Fl)$ itself need not have the rational homotopy type of $\GL_n$ (over $\CC$), even for strictly monotonic classes $(d_n,\dotsc,d_1)$.  As discussed in Section~\ref{sec: intro}, this means a (rational) homotopy version of Theorem~\ref{thm: main result} does not hold.

\begin{example}%
  \label{ex:homotopy-type}
  Consider $\Omega^2_{d_n,\dots,d_1}(\Fl)$ for $\kk=\CC$ and $n=1$.
  Then $\Fl =\PP^{1}$ and $\Omega^{2}_{d}(\PP^{1})$ is the space
  of degree $d$ maps $f:\PP^{1}\to \PP^{1}$ obeying the basepoint condition $f([1:0])=[1:0]$.
  A general degree $2$ map $f \colon \PP^1 \to \PP^1$ is
  uniquely determined, up to the (free) action of $\PGL_2$ on the codomain, by its unordered pair of (distinct) critical points in the domain.
  Taking into account the basepoint condition,
  this gives $\Omega^2_2(\PP^1)$ the structure of an $H$-principal bundle over $\Sym^2 \PP^1 \setminus \Delta$,
  where $H = \Stab_{\PGL_2}([1:0]) \cong \CC^\ast \ltimes \CC$.

  Up to homotopy, $\Sym^2 \PP^1 \setminus \Delta \simeq \RR \PP^2$ (by deforming unordered pairs of distinct points in $S^2$ to unordered pairs of antipodal points).
  Hence $\Omega^2_2(\PP^1)$ has the homotopy type of a principal $S^{1}$ bundle over $\RR \PP^{2}$.
  There are only two such bundles (as $H^2(\RR \PP^2, \ZZ) \cong \ZZ/2\ZZ)$, namely the trivial bundle $S^1 \times \RR \PP^2$ and the mapping torus of the antipodal map $S^2 \to S^2$.  Both are double-covered by $S^1 \times S^2$, so $\pi_2(\Omega^2_2(\PP^1)) \cong \ZZ$, and in particular $\Omega^2_2(\PP^1)$ does not have the rational homotopy type of $U(1) \simeq \GL_1$.
\end{example}

By a related argument, we can show that the projection $\pi_k \colon \Omega^2_{d_n,\dots,d_{k}}(\Flnk{k}) \to \Omega^2_{d_n,\dots,d_{k+1}}(\Flnk{k+1})$ from Section~\ref{subsec:partial-flags} (see~\eqref{eq:pik-def}) is not a fibration in general: specifically its fibers need not be isomorphic as varieties (or even rationally homotopy equivalent).
That is, there is no analogue of Corollary~\ref{cor:fiber-type} for isomorphism type or rational homotopy type in place of motivic class.
We use the terminology from Section~\ref{subsec:partial-flags}.

\begin{example}%
  \label{ex:fibers-different}
  Consider the case $n=2$, $k=1$, $d_2=2$ and $d_1=3$.
  Given $f \in \Omega^2_{2}(\operatorname{Fl}_{3,2})$ (i.e., a quotient $\OO^3_{\PP^1} \to E_2$ with $\deg E_2 = 2$),
  the fiber $\pi_1^{-1}(f) \subseteq \Omega^2_{2,3}(\operatorname{Fl}_{3,1})$
  is isomorphic to $N_{v_2}(E_2(1))$ (by Lemma~\ref{lem: fibers of Map(Fl(n+1,k))-->Map(Fl(n+1,k+1))}).

  If $E_2 \cong \OO_{\PP^1}(0) \oplus \OO_{\PP^1}(2)$ and $v_2 = (1,0)$ in these coordinates,
  then $N_{v_2}(\OO_{\PP^1}(1) \oplus \OO_{\PP^1}(3))$ may be identified with pairs of polynomials $[f_0(x_0,x_1):f_1(x_0,x_1)]$ with homogeneous degrees $1$ and $3$ respectively, such that $[f_0(1,0):f_1(1,0)] = [1:0]$ and $f_0$ does not divide $f_1$.  Hence the fiber is isomorphic as a variety to $\kk \times (\kk^\ast \times \kk^2)$.

  On the other hand, if $E_2 \cong \OO_{\PP^1}(1) \oplus \OO_{\PP^1}(1)$ and $v_2=(1,0)$, the space $N_{v_2}(\OO_{\PP^1}(2) \oplus \OO_{\PP^1}(2))$ may similarly be identified with coprime pairs of homogeneous degree $2$ polynomials $[f_0(x_0,x_1): f_1(x_0,x_1)]$ such that $[f_0(1,0):f_1(1,0)]=[1:0]$; i.e., with morphisms $f \colon \PP^1 \to \PP^1$ of degree $2$ with $f([1:0])=[1:0]$. By the argument in Example~\ref{ex:homotopy-type} above, this fiber does not have the homotopy type of $S^1$ when $\kk=\CC$, so is certainly not isomorphic as a variety to $\kk^\ast \times \kk^3$ in that case.

  Finally, it is easy to see that there exist points $f \in \Omega^2_{2}(\operatorname{Fl}_{3,2})$ realizing both cases.
\end{example}

\appendix

\section{Detailed discussion of use of AI}%
\label{app:ai-details}

We expand the outline of the discussion of AI started in Section~\ref{subsec: role of AI},
giving further mathematical detail.

The original prompts concerned the following counting analogue (and consequence) of Theorem~\ref{thm: main result}.
\begin{corollary}%
  \label{cor:counting}
  In the set-up of Theorem~\ref{thm: main result}, suppose further that $\kk$ is a finite field, $\kk = \FF_q$.
  Then
  \[
    |\Omega(\Fl)(\kk)| = |{\GL_n(\kk)}|\, q^{D-n^2}.
  \]
\end{corollary}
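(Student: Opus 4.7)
The plan is to obtain this statement as an immediate corollary of Theorem~\ref{thm: main result}, by applying the point-counting ring homomorphism $\chi_q \colon K_0(\Var_{\FF_q}) \to \ZZ$ defined on generators by $[X] \mapsto |X(\FF_q)|$. The key facts I would invoke are standard: $\chi_q$ is well-defined (respects the scissors relations), is multiplicative on products, and sends the Lefschetz class $\LL = [\AA^1]$ to $q$. Since Theorem~\ref{thm: main result} holds over any base field $\kk$ satisfying the paper's setup (in particular $\kk = \FF_q$), applying $\chi_q$ to both sides of
\[
  [\Omega^2_{d_n,\dotsc,d_1}(\Fl)] = [\GL_n \times \AA^{D-n^2}]
\]
yields $|\Omega^2_{d_n,\dotsc,d_1}(\Fl)(\FF_q)| = |\GL_n(\FF_q)| \cdot q^{D-n^2}$, which is exactly the claim. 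There is essentially no obstacle here: the entire content sits inside Theorem~\ref{thm: main result}, and this corollary is a purely formal consequence.

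If one wished to avoid the language of $K_0(\Var)$ altogether and produce a self-contained proof of the point count, the natural alternative would be to re-run the argument of Section~\ref{subsec:partial-flags} directly at the level of $\FF_q$-points. Concretely, Lemma~\ref{lem:zlt} shows that $\pi_k$ restricted to each stratum $\Omega^2_{d_n,\dots,d_{k+1};\vecr,\ell}(\Flnk{k+1})$ is an honest Zariski-locally trivial fibration with fiber $N_{u_\ell}(F_{\vecr}(d_k-d_{k+1}))$, and by Proposition~\ref{prop: key prop giving [N(E)]} together with Corollary~\ref{cor:fiber-type} the number of $\FF_q$-points in this fiber depends only on $d_k, d_{k+1}, k$ (not on $\vecr$ or $\ell$). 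Summing stratum by stratum and inducting on $k$ gives the same product formula $q^{\alpha} \prod_{k=1}^{n} (q^k - 1)$ obtained in \eqref{eqn: first formula for [Map(P1,Fl)]}, which then rearranges into $|\GL_n(\FF_q)| \cdot q^{D-n^2}$ exactly as in the human proof.

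For the paper as written, however, specializing the motivic identity is overwhelmingly the cleanest route, and I would present it in a single short paragraph invoking $\chi_q$ and Theorem~\ref{thm: main result}.
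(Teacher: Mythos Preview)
Your proposal is correct and matches the paper's approach: the corollary is stated as an immediate consequence of Theorem~\ref{thm: main result} via the point-counting realization $K_0(\Var_{\FF_q}) \to \ZZ$, and the paper does not supply any further argument beyond that. Your optional alternative of rerunning the fibration argument directly over $\FF_q$ is also sound but unnecessary here.
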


The sequence of events was then as follows.
\begin{itemize}
  \item First, the AI systems provided complete and readable solutions to Corollary~\ref{cor:counting} in special cases such as $n=2$, $d_1=2$, $d_2=3$ or $n=2$, $0 < d_1 < d_2$ arbitrary.  These cases are relatively concrete; see Appendix~\ref{app:prompts} and Appendix~\ref{app:output} below.
  \item These solutions contained a key step analogous to special cases of Proposition~\ref{prop: key prop giving [N(E)]} when $n=2$. In particular, the key observation was made that the number of choices of $E_2$ does not depend on the splitting type of $E_1$, although not stated in that language.
  \item Human analysis of this proof strategy suggested the statement of Proposition~\ref{prop: key prop giving [N(E)]} in general (or rather, its counting analogue), as well as the approach to proving the main theorem by applying this proposition iteratively to a tower of fibrations $\Omega_\beta^2(\Flnk{k+1}) \to \Omega_\beta^2(\Flnk{k})$, as outlined in Section~\ref{subsec:partial-flags}.
  \item The authors then drafted a new scaffolding including (as a conjecture) a statement analogous to Corollary~\ref{cor:fiber-type}.
  \item Given this, the AI systems were able to complete the proof of Corollary~\ref{cor:counting}.
  \item Subsequently, the systems (given the previous successful solutions) were prompted to follow the same strategy to prove the stronger statement Theorem~\ref{thm: main result}, which it did.
  \item Finally, the model was asked whether an analogue of Proposition~\ref{prop: key prop giving [N(E)]} holds over $\CC$ for homotopy equivalence, rather than equivalence in the Grothendieck ring.
    It successfully produced the counterexample discussed in Section~\ref{subsec:negative-examples} (though we did not follow the model's justification very closely).
\end{itemize}

\section{Example prompts}%
\label{app:prompts}

Overall a significant number of different prompts were presented to the various AI systems as the authors experimented with different approaches.  For brevity we present a couple of representative ones.

Note that the notation and conventions in some of these prompts is \emph{not} compatible with the notation and conventions above.

\subsection{An early scaffold}

The following represents the first few parts of an early ``scaffolding'' of subproblems.
These scaffolds serve (at least) two roles: (i) to explore the limits of the model's capabilities, and (ii) optionally, to provide correct solutions to include in subsequent prompts.

\begin{prompt}
  I will ask you a question in research mathematics, so I will need you to think carefully and rigorously.

  Consider $\mathbb{P}^n$, projective $n$-space over a field.  We will work over the finite field $\mathbb{F}_q$ for now.

  \begin{itemize}
    \item I am interested in studying maps $\pi$ from $\mathbb{P}^1$ to the flag variety $Fl(n+1)$ parametrizing flags in $(n+1)$-space, that send $\infty \in \mathbb{P}^1$ to a fixed flag $f \in Fl(n+1)$.
    \item Now each such map has some numerical invariants, corresponding to the class of $\pi_* \mathbb{P}^1$ in the Chow groups of $Fl(n+1)$.
    \item Precisely: there are $n$ line bundles generating the Picard group of $Fl(n+1)$, and the degrees of the pullback of these line bundles to $\mathbb{P}^1$ are these invariants; I’ll denote them $d_1$ through $d_n$.
  \end{itemize}

  \begin{enumerate}
    \item To make sure you are understanding me, could you tell me which line bundles you would choose on $Fl(n+1)$, and how you would interpret $d_1$ through $d_n$?

    \item Next, please consider the case $n=1$.  How many maps are there (over $\mathbb{F}_q$) when $d_1$ is one of $1$, $2$ or $3$?  Please give a rigorous proof.

    \item When you've done that, please compute the number of points (over $\mathbb{F}_q$) when $n=2$, $d_1=2$ and $d_2=3$.
  \end{enumerate}
\end{prompt}

\subsection{The $n=2$ case in general}%
\label{app:n=2}

\begin{prompt}
    Consider $\mathbb{P}^n$, projective $n$-space over a field.  We will work over the finite field $\mathbb{F}_q$ for now.  I am interested in studying maps $\pi$ from $\mathbb{P}^1$ to the flag variety $Fl(n+1)$ parametrizing flags in $(n+1)$-space, that send $\infty \in \mathbb{P}^1$ to a fixed flag $f \in Fl(n+1)$.  Now each such map has some numerical invariants, corresponding to the class of $\pi_* \mathbb{P}^1$ in the Chow groups of $Fl(n+1)$.

More precisely, let $V$ be an $(n+1)$-dimensional vector space. The flag variety $Fl(n+1)$ parametrizes complete flags $f=(V_1 \subset V_2 \subset \dots \subset V_{n+1}=V)$, where $\dim(V_i)=i$.  The Picard group of $Fl(n+1)$ is isomorphic to $\mathbb{Z}^n$. We choose the generators corresponding to the projections onto the Grassmannians. Let $p_k: Fl(n+1) \to Gr(k, n+1)$ be the projection sending a flag $f$ to the subspace $V_k$.

The Picard group of $Gr(k, n+1)$ is generated by the ample line bundle $\mathcal{O}(1)$, corresponding to the Plücker embedding. We choose the line bundles 
  \[
    L_k = p_k^* \mathcal{O}_{Gr(k, n+1)}(1)
  \]
  for $k=1, \dots, n$ as the generators of $\text{Pic}(Fl(n+1))$.

Given a map $\pi: \mathbb{P}^1 \to Fl(n+1)$, the numerical invariants $d_k$ are defined as:
\[d_k = \deg(\pi^* L_k).\]

Hence, $d_k$ is the degree of the induced map $p_k \circ \pi: \mathbb{P}^1 \to Gr(k, n+1)$. Geometrically, $d_k$ measures the degree of the curve traced out by the $k$-dimensional subspace $V_k(t)$ as $t$ varies in $\mathbb{P}^1$.

\vspace{0.5\baselineskip}
\noindent
\textbf{Research problem:} Find and carefully prove a general formula for the number of maps $\pi$ when $n=2$ and $1 \le d_1 < d_2$ are (otherwise) arbitrary.
\end{prompt}

\subsection{The hinting prompt for general $n$}

\begin{prompt}
\textbf{Background}

Let \(K=\mathbb{F}_q\) be a finite field. We are interested in flags
and partial flags in \(K^{n+1}\). Let:
\begin{itemize}
    \item \(Fl(n+1)\) denote the space
of flags full flags, that is, subspaces
\(0 \le V_1 \le V_2 \le \dots \le V_n \le K^{n+1}\) where \(V_i\) has
dimension \(i\);
    \item \(Fl(n+1; r_1,\dots,r_k)\) denote the space of
partial flags \(0 \le V_1 \le \dots \le V_k \le K^{n+1}\) where
\(\dim(V_i) = r_i\).
\end{itemize}

Note that:
\begin{itemize}
    \item These spaces can be given the structure of algebraic
varieties over \(K\).
    \item In particular, \(Fl(n+1; r_1)\) is just the
Grassmannian variety \(Gr(n+1,r_1)\) and more specifically \(Fl(n+1,1)\)
is just \(\mathbb{P^n}\).
    \item There are \(k\) maps
\(\psi_i \colon Fl(n+1; r_1,\dots,r_k) \to Gr(n+1, r_i)\) given by
mapping the flag \((V_1,\dots,V_k)\) to the subspace \(V_i\).
\end{itemize}
Suppose \(\pi \colon \mathbb{P}^1 \to Gr(n+1,k)\) is a morphism. Recall
that \(\deg(\pi)\) may be defined as follows: take the tautological
vector bundle on \(Gr(n+1,k)\), pull it back to \(\mathbb{P}^1\), and
take the degree of the resulting vector bundle.

Now suppose \(\mathbb{P}^1 \to Fl(n+1; r_1,\dots,r_k)\) is a morphism.
Let \(d_1,\dots,d_k\) be the degrees of \(\pi\), defined by
\(d_i = \deg(\psi_i \circ \pi)\).

Suppose integers \((d_1,d_2,\dots,d_k) \ge 0\) are given. Let
\(M(r_1,\dots,r_k; d_1,\dots,d_k)\) denote the space of morphisms
\(\pi \colon \mathbb{P}^1 \to Fl(n+1; r_1,\dots,r_k)\) having these
degrees.

We are interested in finding the number of points (over \(K\)) of
  \[
    M(r_1,\dots,r_k;d_1,\dots,d_k)
  \]
  in terms of \(q\), \(n\) and the other values.
  \vspace{0.5\baselineskip}

\noindent
\textbf{Question 1}: to warm up, find \(|M(1; d)|\) for all \(d \ge 1\)
and all \(n \ge 1\).
  \vspace{0.5\baselineskip}

\noindent
\textbf{Question 2}: Suppose \(\pi_1 \in M(1; d_1)\) is given, where
\(d_1>0\). For \(n \ge 2\) and any \(d_2>d_1\), determine the number of
points \(\pi \in M(1,2; d_1,d_2)\) whose projection to \(M(1;d_1)\) (by
forgetting the first subspace \(V_2\)) is \(\pi_1\).
  \vspace{0.5\baselineskip}

\noindent
\textbf{Question 3}: if \(0 < d_1 < d_2 < \cdots < d_n\), find
\(|M(1,2,\dots,n; d_1,d_2,\dots,d_n)|\).
\end{prompt}

\section{Example AI outputs}%
\label{app:output}

\newcommand{\F}{\mathbb{F}_q}
\newcommand{\Fs}{\F[s]}
\newcommand{\Qset}[1]{\mathcal{Q}_{#1}}
\newcommand{\Rset}[1]{\mathcal{R}_{#1}}
\newcommand{\degp}{\mathrm{deg}}
\newcommand{\gcdp}{\mathrm{gcd}}
\newcommand{\Npr}{N_{\mathrm{pr}}}
\newcommand{\Ntotal}{N_{\mathrm{total}}}

The following is a solution to the prompt in Appendix~\ref{app:n=2} generated by the system \emph{FullProof} developed by the fourth author (see Section~\ref{subsec: role of AI}).
It has been edited only for formatting compatibility with this document.

\begin{response}

  \noindent
  {\large \bf Introduction and Main Theorem}

  \noindent
The geometric problem of counting maps $\pi: \mathbb{P}^1 \to Fl(3)$ over a finite field $\F$ of a given bi-degree $(d_1, d_2)$ with a fixed base point at infinity can be translated into a purely algebraic problem. A map corresponds to a pair of polynomial triples $(Q, R)$ satisfying specific degree constraints and an incidence relation. This paper provides a rigorous count of such pairs.

\begin{aidefinition}[Polynomial Triple Sets]
Let $\Fs$ be the ring of polynomials in an indeterminate $s$ over a finite field $\F$.
\begin{enumerate}
    \item For an integer $d \ge 1$, we define $\Qset{d}$ to be the set of triples $Q=(q_0, q_1, q_2) \in (\Fs)^3$ where $q_0$ is monic of degree $d$, and $\degp(q_1), \degp(q_2) \le d-1$. We define $\Qset{0} = \{(1,0,0)\}$.
    \item For an integer $d \ge 1$, we define $\Rset{d}$ to be the set of triples $R=(r_0, r_1, r_2) \in (\Fs)^3$ where $r_2$ is monic of degree $d$, and $\degp(r_0), \degp(r_1) \le d-1$.
\end{enumerate}
\end{aidefinition}

\begin{aidefinition}[Intersecting and Primitive Pairs]
A polynomial triple $T=(t_0, t_1, t_2)$ is \textbf{primitive} if its components are coprime, i.e., $\gcdp(t_0, t_1, t_2) = 1$.
A pair of triples $(Q, R)$ with $Q \in \Qset{d_1}$ and $R \in \Rset{d_2}$ is an \textbf{intersecting pair} if their dot product is zero: $Q \cdot R = q_0 r_0 + q_1 r_1 + q_2 r_2 = 0$.
The number we wish to count, $N(d_1, d_2)$, is the number of intersecting pairs $(Q,R)$ where both $Q$ and $R$ are primitive.
\end{aidefinition}

Our main result is the following theorem.

\begin{aitheorem}
For integers $d_1, d_2$ such that $1 \le d_1 < d_2$, the number of primitive intersecting pairs $(Q, R)$ with $Q \in \Qset{d_1}$ and $R \in \Rset{d_2}$ is given by:
\[ N(d_1, d_2) = (q-1)^2(q+1)q^{2d_1+2d_2-3} \]
\end{aitheorem}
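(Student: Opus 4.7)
The plan is to fibre the count over a choice of primitive $Q \in \Qset{d_1}$, then count compatible primitive $R \in \Rset{d_2}$. I will define $f(Q) := \#\{R \in \Rset{d_2} \text{ primitive} : Q \cdot R = 0\}$ and show that $f(Q)$ is \emph{independent} of the particular primitive $Q$, whence $N(d_1,d_2)$ equals $(\text{\# primitive } Q)\cdot f(Q)$. The splitting-independence of $f(Q)$ will be the main obstacle; my strategy is to translate it into a statement about sections of a rank-$2$ vector bundle on $\PP^1$ and apply Proposition~\ref{prop: key prop giving [N(E)]}.

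First, I would count primitive $Q$'s by a standard $\gcd$-stratification. Every $Q \in \Qset{d_1}$ factors uniquely as $Q = g\,\tilde Q$ with $g = \gcd(Q)$ monic of degree $a$ and $\tilde Q \in \Qset{d_1-a}$ primitive (with the convention that $\Qset{0}$ contains the single primitive element $(1,0,0)$). Summing yields
\[
  q^{3d_1} = \sum_{a=0}^{d_1} q^a \cdot (\text{\# primitive in } \Qset{d_1-a}),
\]
which is readily inverted (equivalently, by extracting coefficients from the generating function identity $\sum_{d\ge 0}(\text{\# primitive in } \Qset{d})\,t^d = (1-qt)/(1-q^3 t)$) to give $(\text{\# primitive in } \Qset{d_1}) = (q^2-1)\,q^{3d_1-2}$ for $d_1 \ge 1$.

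Next I would fix a primitive $Q$ and compute $f(Q)$ geometrically. A primitive $Q$ defines a surjection $\OO_{\PP^1}^{\oplus 3} \twoheadrightarrow \OO_{\PP^1}(d_1)$, whose kernel $K$ is a rank-$2$ vector bundle of degree $-d_1$. A triple $R \in \Rset{d_2}$, viewed (via its components) as a morphism $\OO_{\PP^1}(-d_2) \to \OO_{\PP^1}^{\oplus 3}$, satisfies $Q \cdot R = 0$ precisely when this morphism factors through $K \hookrightarrow \OO_{\PP^1}^{\oplus 3}$; primitivity of $R$ amounts to the factored map $\OO_{\PP^1}(-d_2) \to K$ being a sub-bundle inclusion, equivalently a nowhere-vanishing section of $K(d_2)$. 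The monic/low-degree structure of $\Rset{d_2}$ is precisely the basepoint condition of Definition~\ref{defn: N(F)} at $[1:0]$, which identifies $f(Q) = |N_p(K(d_2))(\FF_q)|$ for a suitable basepoint $p$. The Birkhoff--Grothendieck splitting $K \cong \OO(-a)\oplus\OO(-b)$ satisfies $a,b \ge 0$ and $a+b = d_1$, so by the hypothesis $d_1 < d_2$ the bundle $K(d_2)$ has all summands of degree $\ge d_2 - d_1 \ge 1$; hence Proposition~\ref{prop: key prop giving [N(E)]} applies with $r=2$ and $d = 2d_2 - d_1$, yielding
\[
  [N_p(K(d_2))] = \LL^{2d_2-d_1-1}(\LL-1),
\]
so $f(Q) = (q-1)\,q^{2d_2-d_1-1}$, independent of $(a,b)$ and hence of $Q$.

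Combining the two counts,
\[
  N(d_1,d_2) = (q^2-1)\,q^{3d_1-2}\cdot(q-1)\,q^{2d_2-d_1-1} = (q-1)^2(q+1)\,q^{2d_1+2d_2-3},
\]
as claimed. The hard part will be the splitting-independence invoked in the previous paragraph: \emph{a priori} the splitting type of $K$ depends non-trivially on $Q$, and so could $f(Q)$. The short telescoping identity $[N_p(E)] = [\Sigma_p(E)] - \LL\,[\Sigma_p(E(-1))]$ in the proof of Proposition~\ref{prop: key prop giving [N(E)]}, together with the observation that $[\Sigma_p(E)]$ depends only on $\deg(E)$ once all summands of $E$ are non-negative, is exactly what forces the apparent dependence on the splitting to cancel.
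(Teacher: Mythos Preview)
Your proof is correct and takes a genuinely different route from the paper's argument. The paper's proof here is purely algebraic and self-contained: it first counts \emph{all} intersecting pairs $N_{\mathrm{total}}(d_a,d_b)$ by a linear-algebra dimension count (Proposition~\ref{prop:solutions_fixed_q}, which depends on $\deg\gcd Q$), packages this via the summatory function $\Sigma(d)$ of Proposition~\ref{prop:sigma_d}, and then extracts $N(d_1,d_2)$ by a double M\"obius inversion on the gcd's of $Q$ and $R$ separately. You instead fibre directly over primitive $Q$, reinterpret the primitive-$R$ count geometrically as based nowhere-vanishing sections of the rank-$2$ kernel bundle $K(d_2)$, and invoke Proposition~\ref{prop: key prop giving [N(E)]} from the main body to obtain the splitting-independence of $f(Q)$ in one stroke. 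Your route is shorter and more conceptual---it is essentially the $n=2$, finite-field specialization of the proof of Theorem~\ref{thm: main result}---but it imports the main body's key proposition; the paper's argument in this appendix is longer and more computational but entirely elementary, which is appropriate given that it is presented as the independent AI-generated solution that \emph{preceded} and motivated the general Proposition~\ref{prop: key prop giving [N(E)]}.
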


\noindent{\large \bf Foundational Counting Lemmas}

\noindent
The proof of the main theorem rests on several key counting results within the ring $\Fs$.

\begin{ailemma}[Sum of M\"obius Function]\label{lem:mobius_sum}
Let $\mu$ be the polynomial M\"obius function defined on monic polynomials in $\Fs$. Let $M_k = \sum_{f \text{ monic}, \degp(f)=k} \mu(f)$. Then $M_0=1$, $M_1=-q$, and $M_k=0$ for all integers $k \ge 2$.
\end{ailemma}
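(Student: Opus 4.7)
The plan is to prove this using the standard ``zeta function'' generating series for the polynomial ring $\Fs$.  First I would introduce the two formal power series
\[
  Z(z) = \sum_{f \text{ monic}} z^{\degp(f)} \qquad \text{and} \qquad M(z) = \sum_{f \text{ monic}} \mu(f)\, z^{\degp(f)} = \sum_{k \ge 0} M_k z^k,
\]
where both sums range over monic $f \in \Fs$.  Since there are exactly $q^k$ monic polynomials of degree $k$, the first evaluates to $Z(z) = \sum_{k \ge 0} q^k z^k = (1 - qz)^{-1}$ in $\ZZ[[z]]$.

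The key step will be to show that $Z(z) \cdot M(z) = 1$ as formal power series, from which one immediately concludes $M(z) = 1 - qz$ and hence reads off $M_0 = 1$, $M_1 = -q$, and $M_k = 0$ for all $k \ge 2$.  To establish this identity, I would invoke the defining property of the M\"obius function on the unique factorization domain $\Fs$: namely that $\sum_{g \mid f} \mu(g)$ equals $1$ if $f = 1$ and vanishes otherwise.  Expanding the Cauchy product and grouping pairs $(g, h)$ of monic polynomials by their product $f = gh$, the coefficient of $z^k$ in $Z(z) M(z)$ becomes
\[
  \sum_{\substack{f \text{ monic} \\ \degp(f) = k}} \sum_{g \mid f} \mu(g),
\]
which vanishes for $k \ge 1$ and equals $1$ for $k = 0$, as required.

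There is no substantial obstacle here.  The only ingredient that might warrant a short justification is the M\"obius convolution identity in $\Fs$, but this is a routine consequence of unique factorization, identical in form to the classical identity over $\ZZ$.  Alternatively, if one prefers to avoid generating functions, the same computation can be carried out directly: the $k=0$ case is trivial, and for $k \ge 1$ one checks that monic polynomials $f$ with $\mu(f) \ne 0$ contribute in sign-reversing pairs via the standard squarefree-divisor involution.
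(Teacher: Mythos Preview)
Your proposal is correct and follows essentially the same route as the paper: introduce the zeta series $Z(T)=(1-qT)^{-1}$, observe that the M\"obius series $M(T)$ is its formal inverse, and read off the coefficients of $1-qT$. If anything, you give slightly more detail than the paper does in justifying $Z(T)M(T)=1$.
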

\begin{proof}
Consider the polynomial zeta function $Z(T) = \sum_{f \text{ monic}} T^{\degp(f)}$. The number of monic polynomials of degree $k$ is $q^k$, so $Z(T) = \sum_{k=0}^{\infty} q^k T^k = (1-qT)^{-1}$. The series $\mathcal{M}(T) = \sum_{f \text{ monic}} \mu(f) T^{\degp(f)}$ is the inverse of $Z(T)$ in the ring of formal power series. Thus, $\mathcal{M}(T) = 1-qT$. Comparing coefficients of $\mathcal{M}(T) = \sum_{k=0}^\infty M_k T^k$ with $1-qT$ yields the result.
\end{proof}

\begin{ailemma}[Number of Primitive Triples]\label{lem:n_pr}
Let $\Npr(d)$ be the number of primitive triples in $\Qset{d}$. Then $\Npr(0)=1$, and for $d \ge 1$, $\Npr(d) = q^{3d}-q^{3d-2}$.
\end{ailemma}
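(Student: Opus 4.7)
The plan is to compute $\Npr(d)$ by Möbius inversion on the common divisor $g = \gcd(q_0,q_1,q_2)$, using the values of $M_k$ already established in Lemma~\ref{lem:mobius_sum}.

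First I would compute the total count $|\Qset{d}|$ with no coprimality constraint: $q_0$ is monic of degree $d$ so contributes $q^d$ choices (one per non-leading coefficient), and each of $q_1, q_2$ has degree $\le d-1$, also contributing $q^d$ choices, giving $|\Qset{d}| = q^{3d}$. Next, for each monic polynomial $g \in \Fs$ of degree $k$ with $0 \le k \le d$, I would count the number of triples $(q_0,q_1,q_2) \in \Qset{d}$ with $g \mid q_0, q_1, q_2$. Writing $q_i = g q_i'$, the condition that $q_0$ be monic of degree $d$ forces $q_0'$ to be monic of degree $d-k$ (yielding $q^{d-k}$ choices), while $\deg(q_j) \le d-1$ for $j=1,2$ translates to $\deg(q_j') \le d-1-k$, giving $q^{d-k}$ choices for each (interpreted correctly as a single choice $q_j' = 0$ when $k = d$). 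Thus the count of triples divisible by a fixed monic $g$ of degree $k$ is $q^{3(d-k)}$.

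Applying Möbius inversion,
\[
\Npr(d) = \sum_{k=0}^{d} \Bigl(\sum_{\substack{g \text{ monic} \\ \deg g = k}} \mu(g) \Bigr) q^{3(d-k)} = \sum_{k=0}^{d} M_k \, q^{3(d-k)}.
\]
Invoking Lemma~\ref{lem:mobius_sum}, which gives $M_0 = 1$, $M_1 = -q$, and $M_k = 0$ for $k \ge 2$, the sum collapses to $\Npr(d) = q^{3d} - q \cdot q^{3(d-1)} = q^{3d} - q^{3d-2}$, as claimed. The base case $\Npr(0) = 1$ follows from $\Qset{0} = \{(1,0,0)\}$, which is trivially primitive.

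I do not expect significant obstacles here: the computation is a clean Möbius inversion of the sort standard in polynomial analogues of classical elementary number theory. The only point requiring a little care is ensuring that the degree bounds on $q_j'$ are correctly accounted for at the boundary $k=d$, and that the Möbius sum is validly swapped with the inner parameterization — both of which are routine since the sum over $k$ is finite (supported on $k \le d$) and the inner count $q^{3(d-k)}$ is independent of the specific monic $g$ of degree $k$.
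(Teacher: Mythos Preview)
Your proposal is correct and follows essentially the same approach as the paper: both perform M\"obius inversion over the monic common divisor $g$, reduce to the sum $\sum_{k=0}^d M_k\, q^{3(d-k)}$, and collapse it using Lemma~\ref{lem:mobius_sum}. Your version is slightly more explicit about the parameterization $q_i = g q_i'$ and the boundary case $k=d$, but the argument is the same.
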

\begin{proof}
For $d=0$, $\Qset{0} = \{(1,0,0)\}$, which is primitive, so $\Npr(0)=1$. For $d \ge 1$, we use M\"obius inversion. The number of triples in $\Qset{d}$ where all components are divisible by a monic polynomial $g$ of degree $k \le d$ is $q^{3(d-k)}$.
  \begin{align*}
    \Npr(d) &= \sum_{g \text{ monic}} \mu(g) |\{Q \in \Qset{d} : g|Q\}| \\
    &= \sum_{k=0}^d \Biggl( \sum_{\substack{g \text{ monic}\\\degp g=k}} \mu(g) \biggr) q^{3(d-k)}
    = \sum_{k=0}^d M_k q^{3(d-k)}.
  \end{align*}
Using Lemma~\ref{lem:mobius_sum}, for $d \ge 1$, this sum is $M_0 q^{3d} + M_1 q^{3(d-1)} = (1)q^{3d} + (-q)q^{3(d-1)} = q^{3d}-q^{3d-2}$.
\end{proof}

\begin{aiproposition}[A Key Summatory Function]\label{prop:sigma_d}
Let $\Sigma(d) = \sum_{Q \in \Qset{d}} q^{\degp(\gcdp(Q))}$. Then $\Sigma(0)=1$, and for $d \ge 1$, $\Sigma(d) = q^{3d} + q^{3d-1} - q^{2d-1}$.
\end{aiproposition}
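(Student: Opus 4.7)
The plan is to stratify $\Qset{d}$ by the value of the (monic) greatest common divisor $g = \gcdp(Q)$, and use Lemma~\ref{lem:n_pr} to replace each stratum's count by an explicit polynomial in $q$. The base case $d=0$ is immediate: $\Qset{0} = \{(1,0,0)\}$ has $\gcdp = 1$ of degree $0$, so $\Sigma(0) = q^0 = 1$.

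For $d \ge 1$, the first step is to establish a bijection: for any monic $g \in \Fs$ of degree $k$ with $0 \le k \le d$,
\[
  \{ Q \in \Qset{d} : \gcdp(Q) = g \} \;\longleftrightarrow\; \{ Q' \in \Qset{d-k} : Q' \text{ primitive} \}
\]
given by $Q = g \cdot Q'$. The key check is that multiplication by $g$ preserves the asymmetric defining constraints of $\Qset{\cdot}$: if $q_0'$ is monic of degree $d-k$ then $g q_0'$ is monic of degree $d$, and if $\degp(q_i') \le d-k-1$ then $\degp(g q_i') \le d-1$ (and conversely, if $g \mid Q$ then $Q/g$ has the required degrees). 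Counting the $q^k$ monic polynomials of degree $k$ and weighting by $q^{\degp g}$, this stratification yields
\[
  \Sigma(d) = \sum_{k=0}^{d} q^{2k}\, \Npr(d-k).
\]

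The second step is to substitute the explicit values from Lemma~\ref{lem:n_pr}, namely $\Npr(0)=1$ for the $k=d$ term and $\Npr(j) = q^{3j}-q^{3j-2}$ for $1 \le j \le d$. This reduces the claim to the identity
\[
  q^{2d} + \sum_{k=0}^{d-1} q^{2k}\bigl(q^{3(d-k)} - q^{3(d-k)-2}\bigr) = q^{3d} + q^{3d-1} - q^{2d-1},
\]
which is a direct geometric-series evaluation.

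The main potential obstacle is the careful bookkeeping in the bijection: because $\Qset{d}$ treats $q_0$ asymmetrically (monic, degree exactly $d$) from $q_1, q_2$ (arbitrary, degree at most $d-1$), one must verify that the monic-gcd stratification interacts correctly with these constraints in both directions. An alternative route would be a direct M\"obius inversion mirroring the proof of Lemma~\ref{lem:n_pr}, with weight $q^{\degp g}$ folded into the sum; but the gcd-stratification above is cleaner and reuses Lemma~\ref{lem:n_pr} as a black box. Once the bijection is in place, everything else is a routine manipulation.
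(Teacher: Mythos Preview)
Your proposal is correct and follows essentially the same route as the paper: stratify $\Qset{d}$ by the monic gcd $g$, use the bijection $Q = gQ'$ with $Q' \in \Qset{d-\degp g}$ primitive to obtain $\Sigma(d) = \sum_{k=0}^{d} q^{2k}\,\Npr(d-k)$, then plug in Lemma~\ref{lem:n_pr} and sum the resulting geometric series. Your extra care about the asymmetric degree constraints on $q_0$ versus $q_1,q_2$ is a welcome expansion of what the paper leaves implicit.
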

\begin{proof}
For $d=0$, $\Sigma(0)=q^{\degp(1)}=1$. For $d \ge 1$, we group terms by their gcd, $g$. Any $Q \in \Qset{d}$ has a unique representation $Q=gQ'$ where $g$ is monic and $Q'$ is a primitive triple in $\Qset{d-\degp g}$.
\[ \Sigma(d) = \sum_{k=0}^d \sum_{\substack{g \text{ monic}\\\degp g=k}} q^k \Npr(d-k) = \sum_{k=0}^d q^{2k} \Npr(d-k). \]
Let $j=d-k$.
\[\Sigma(d) = \sum_{j=0}^d q^{2(d-j)} \Npr(j) = q^{2d}\Npr(0) + \sum_{j=1}^d q^{2(d-j)}(q^{3j}-q^{3j-2}).\]
\[ \Sigma(d) = q^{2d} + (q^2-1) \sum_{j=1}^d q^{2d-2j+3j-2} = q^{2d} + (q^2-1) q^{2d-2} \sum_{j=1}^d q^j. \]
  \begin{align*} \Sigma(d) &= q^{2d} + (q^2-1) q^{2d-2} \left(\frac{q(q^d-1)}{q-1}\right) \\
    &= q^{2d} + (q+1)q^{2d-1}(q^d-1) = q^{3d} + q^{3d-1} - q^{2d-1}.
  \end{align*}
\end{proof}

\noindent
{\large \bf Counting Total Intersecting Pairs}

\noindent
We first count the total number of intersecting pairs, $\Ntotal(d_a, d_b)$, without the primitivity constraint.

\begin{aiproposition}[Solutions for a Fixed $Q$]\label{prop:solutions_fixed_q}
Let $d_b \ge d_a \ge 1$. For a fixed $Q \in \Qset{d_a}$, the number of $R \in \Rset{d_b}$ such that $Q \cdot R = 0$ is $q^{2d_b - d_a + \delta}$, where $\delta=\degp(\gcdp(Q))$.
\end{aiproposition}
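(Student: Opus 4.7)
The plan is to recast the count as linear algebra over $\F$, then pin down the monicity of $r_2$ by comparing two kernel dimensions.

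First I would introduce the $\F$-vector space $V = \Fs_{\le d_b - 1} \times \Fs_{\le d_b - 1} \times \Fs_{\le d_b}$ of dimension $3d_b + 1$, together with the $\F$-linear map $\phi \colon V \to W := \Fs_{\le d_a + d_b - 1}$ sending $(r_0, r_1, r_2)$ to $q_0 r_0 + q_1 r_1 + q_2 r_2$. The set $\Rset{d_b}$ is carved out of $V$ by the single affine condition $[s^{d_b}] r_2 = 1$, so the quantity sought is $|\phi^{-1}(0) \cap \Rset{d_b}|$. Everything will then follow from (i) computing $\dim_\F \ker \phi$ and (ii) checking that the leading-coefficient functional is nonzero on $\ker \phi$.

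The hard part is the image of $\phi$: trivially it is contained in $(g) \cap W$, which has $\F$-dimension $d_a + d_b - \delta$, and I claim this inclusion is an equality. To prove it, I would factor $q_i = g \tilde q_i$, so that $\tilde Q$ is primitive, $\tilde q_0$ is monic of degree $d_a - \delta$, and $\degp \tilde q_1, \tilde q_2 \le d_a - 1 - \delta$. Because $\tilde Q$ is primitive, the images $\bar{\tilde q_1}, \bar{\tilde q_2}$ generate the unit ideal in $\Fs/(\tilde q_0)$; this lets me solve $\tilde q_1 r_1 + \tilde q_2 r_2 \equiv \tilde p \pmod{\tilde q_0}$ with $\degp r_1, r_2 < d_a - \delta$ for any target $\tilde p$ of degree at most $d_a + d_b - 1 - \delta$, and then $r_0 := (\tilde p - \tilde q_1 r_1 - \tilde q_2 r_2)/\tilde q_0$ has degree at most $d_b - 1$ by a short count that uses $d_a \le d_b$. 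This produces the required preimage in $V$, so $\phi$ is surjective onto $(g) \cap W$, and rank-nullity gives $\dim_\F \ker \phi = 2d_b - d_a + \delta + 1$.

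To upgrade $|\ker \phi|$ to a count with $r_2$ monic of degree exactly $d_b$, I would rerun the same division argument on the smaller space $V' = \Fs_{\le d_b - 1}^3$ (where $r_2$ is also capped at degree $d_b - 1$). The $r_0, r_1, r_2$ produced above already satisfy $\degp \le d_b - 1$, so $\phi|_{V'}$ is still surjective onto $(g) \cap W$. Hence $\ker \phi|_{V'}$ has codimension exactly $1$ inside $\ker \phi$, which forces $(r_0, r_1, r_2) \mapsto [s^{d_b}] r_2$ to be a nonzero $\F$-linear functional on $\ker \phi$. Its fiber over $1$ is then a coset of $\ker \phi|_{V'}$ of size $q^{2d_b - d_a + \delta}$, which is the formula claimed.

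The principal obstacle is the surjectivity of $\phi$ onto $(g) \cap W$, and within it the step that $(\bar{\tilde q_1}, \bar{\tilde q_2})$ generates the unit ideal in $\Fs/(\tilde q_0)$: this is precisely where full primitivity of $\tilde Q$ is used, and a naïve argument that only invoked $\gcdp(\tilde q_1, \tilde q_2)$ would fail whenever $\tilde q_1, \tilde q_2$ share factors with $\tilde q_0$. Once surjectivity is in hand, both the rank-nullity computation and the final coset count are routine.
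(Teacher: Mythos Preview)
Your argument is correct and follows essentially the same linear-algebra route as the paper: identify the image of the dot-product map as $(g)\cap W$, apply rank--nullity, and then account for the monicity of $r_2$. The only minor difference is that the paper handles monicity by substituting $r_2=s^{d_b}+\rho_2$ and passing to an inhomogeneous system on $\Fs_{\le d_b-1}^3$, whereas you compare the kernels over $V$ and $V'$ to show the leading-coefficient functional is nontrivial; along the way you also spell out the surjectivity onto $(g)\cap W$ that the paper leaves as a ``standard argument.''
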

\begin{proof}
Let $R=(r_0, r_1, r_2)$ with $r_0, r_1 \in \{p \in \Fs \mid \degp p < d_b\}$ and $r_2 \in \{p \in \Fs \mid p \text{ is monic, } \degp p = d_b\}$. The condition on $r_2$ means $r_2 = s^{d_b} + \rho_2$ for a unique $\rho_2$ with $\degp \rho_2 < d_b$.
The equation $q_0r_0+q_1r_1+q_2r_2=0$ becomes $q_0r_0+q_1r_1+q_2\rho_2 = -q_2s^{d_b}$.
Let $V = \{p \in \Fs \mid \degp p < d_b\}^3$. This is a vector space of dimension $3d_b$. The number of solutions $(r_0,r_1,\rho_2) \in V$ to this inhomogeneous linear equation is either 0 or $|\ker(\mathcal{L})|$, where $\mathcal{L}:V \to \{p \in \Fs \mid \degp p < d_a+d_b\}$ is $\mathcal{L}(\rho_0,\rho_1,\rho_2) = q_0\rho_0+q_1\rho_1+q_2\rho_2$.
The size of the kernel is the number of solutions to the homogeneous equation $q_0\rho_0+q_1\rho_1+q_2\rho_2=0$. Let $g=\gcdp(Q)$ and $Q'=Q/g$. The equation is equivalent to $Q' \cdot (\rho_0, \rho_1, \rho_2)=0$. Since $Q'$ is primitive, a standard argument shows the solution space in $V$ has dimension $3d_b - (d_a+d_b-\delta) = 2d_b-d_a+\delta$. So $|\ker(\mathcal{L})| = q^{2d_b-d_a+\delta}$.
The solution set is non-empty because the image of $\mathcal{L}$ is precisely the set of polynomials in $\{p \mid \degp p < d_a+d_b\}$ divisible by $g$. The target vector $-q_2s^{d_b}$ is divisible by $g$ and has degree $\degp q_2 + d_b < d_a+d_b$, so it is in the image.
\end{proof}

\begin{aitheorem}[Total Intersecting Pairs]\label{thm:n_total}
For integers $d_b \ge d_a \ge 0$ with $d_b \ge 1$, the total number of intersecting pairs is $\Ntotal(d_a, d_b) = q^{2d_b - d_a} \Sigma(d_a)$.
\end{aitheorem}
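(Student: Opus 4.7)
The plan is to reduce the theorem directly to the two previously established propositions via a single fibrewise summation over $Q \in \Qset{d_a}$. First I would split off the degenerate case $d_a = 0$, since the hypotheses of Proposition~\ref{prop:solutions_fixed_q} require $d_a \ge 1$. In that case $\Qset{0} = \{(1,0,0)\}$ and the incidence relation $Q \cdot R = 0$ collapses to $r_0 = 0$, leaving $r_1$ free among polynomials of degree $< d_b$ and $r_2$ free among monic polynomials of degree $d_b$; this yields $q^{d_b} \cdot q^{d_b} = q^{2d_b}$ pairs, in agreement with $q^{2d_b - 0} \Sigma(0) = q^{2d_b}$.

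For $d_a \ge 1$, the plan is to write
\[
  \Ntotal(d_a, d_b) = \sum_{Q \in \Qset{d_a}} \bigl|\{R \in \Rset{d_b} : Q \cdot R = 0\}\bigr|,
\]
then substitute the fibrewise count $q^{2d_b - d_a + \degp(\gcdp(Q))}$ provided by Proposition~\ref{prop:solutions_fixed_q}. Factoring out the $Q$-independent prefactor $q^{2d_b - d_a}$ leaves the sum
\[
  \sum_{Q \in \Qset{d_a}} q^{\degp(\gcdp(Q))},
\]
which is the definition of $\Sigma(d_a)$ from Proposition~\ref{prop:sigma_d}. Combining the two steps yields $\Ntotal(d_a, d_b) = q^{2d_b - d_a} \Sigma(d_a)$, as claimed.

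There is no genuine obstacle at this stage: the theorem is essentially a one-line corollary of the two propositions, and the only care required is to dispatch the $d_a = 0$ boundary case by hand, as above. The real content of the argument lies upstream — in the fibrewise linear-algebra computation of Proposition~\ref{prop:solutions_fixed_q}, whose kernel size depends on $Q$ only through $\degp(\gcdp(Q))$ and thereby permits the decoupling into a prefactor times $\Sigma(d_a)$, and in the M\"obius-inversion evaluation of $\Sigma(d_a)$ in Proposition~\ref{prop:sigma_d}. Once those are in hand, the present theorem records only the bookkeeping that assembles them.
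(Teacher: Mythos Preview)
Your proposal is correct and follows essentially the same approach as the paper: both split off the $d_a=0$ case by hand (counting $r_1$ and $r_2$ directly once $r_0=0$ is forced), and for $d_a \ge 1$ sum the fibrewise count from Proposition~\ref{prop:solutions_fixed_q} over $Q \in \Qset{d_a}$, factor out $q^{2d_b-d_a}$, and recognize the remaining sum as $\Sigma(d_a)$.
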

\begin{proof}
We sum the result of Proposition~\ref{prop:solutions_fixed_q} over all $Q \in \Qset{d_a}$.
For $d_b \ge d_a \ge 1$:
  \begin{align*}
    \Ntotal(d_a, d_b) &= \sum_{Q \in \Qset{d_a}} q^{2d_b-d_a+\degp(\gcdp(Q))} \\
    &= q^{2d_b-d_a} \sum_{Q \in \Qset{d_a}} q^{\degp(\gcdp(Q))} = q^{2d_b-d_a}\Sigma(d_a).
  \end{align*}
For $d_a=0$, $\Qset{0}=\{(1,0,0)\}$. The equation $Q \cdot R = 0$ becomes $r_0=0$. $R \in \Rset{d_b}$ requires $\degp(r_1) \le d_b-1$ ($q^{d_b}$ choices) and $r_2$ monic of degree $d_b$ ($q^{d_b}$ choices). Thus $\Ntotal(0,d_b)=q^{2d_b}$.
The formula gives $q^{2d_b-0}\Sigma(0) = q^{2d_b}$, so it holds for $d_a=0$ as well.
\end{proof}

\noindent{\large \bf Proof of the Main Theorem}

\noindent
We now have all the components to prove the main theorem by applying M\"obius inversion to the count of total pairs.

\begin{aiproposition}[M\"obius Inversion for Pairs]\label{prop:mobius_inversion}
For $d_1, d_2 \ge 1$, the number of primitive intersecting pairs $N(d_1, d_2)$ is given by
\[ N(d_1, d_2) = \begin{aligned}[t] &\Ntotal(d_1, d_2) - q \Ntotal(d_1-1, d_2) \\&- q \Ntotal(d_1, d_2-1) + q^2 \Ntotal(d_1-1, d_2-1).\end{aligned} \]
\end{aiproposition}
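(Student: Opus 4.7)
The plan is to apply Möbius inversion twice, independently on the monic common divisors of $Q$ and of $R$.  The starting point is the standard identity, valid for any polynomial triple $T$,
\[
  [\gcdp(T) = 1] \;=\; \sum_{\substack{g \text{ monic}\\ g \mid T}} \mu(g),
\]
which I would apply separately to $Q$ and to $R$.  Inserting this product into the definition of $N(d_1,d_2)$ as a sum over intersecting pairs and swapping the order of summation gives
\[
  N(d_1, d_2) \;=\; \sum_{g_Q, g_R \text{ monic}} \mu(g_Q)\,\mu(g_R)\,\bigl|\{(Q, R) : g_Q \mid Q,\ g_R \mid R,\ Q \cdot R = 0\}\bigr|.
\]

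The next step is to identify the inner count with $\Ntotal$.  For monic $g_Q, g_R$ of degrees $k_1 \le d_1$ and $k_2 \le d_2$ respectively, the map $(Q,R) \mapsto (Q/g_Q,\, R/g_R)$ is a bijection from intersecting pairs in $\Qset{d_1} \times \Rset{d_2}$ divisible by $(g_Q, g_R)$ onto all intersecting pairs in $\Qset{d_1 - k_1} \times \Rset{d_2 - k_2}$.  The content here is just degree bookkeeping: dividing by a monic $g_Q$ of degree $k_1$ preserves monicity of $q_0$ while dropping its degree to $d_1 - k_1$, and reduces the maximum degree of each of $q_1, q_2$ by exactly $k_1$; the analogous statement holds for $R$.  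Moreover $Q \cdot R = g_Q g_R\,(Q' \cdot R')$, so the intersection condition transfers (using that $g_Q g_R \neq 0$).  Hence the inner count equals $\Ntotal(d_1 - k_1,\, d_2 - k_2)$.

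Grouping monic divisors by their degree and recognizing the resulting coefficient $\sum_{\deg g = k} \mu(g) = M_k$ from Lemma~\ref{lem:mobius_sum}, we obtain
\[
  N(d_1, d_2) \;=\; \sum_{k_1 = 0}^{d_1} \sum_{k_2 = 0}^{d_2} M_{k_1}\, M_{k_2}\, \Ntotal(d_1 - k_1,\, d_2 - k_2).
\]
Since $M_0 = 1$, $M_1 = -q$, and $M_k = 0$ for $k \ge 2$ (Lemma~\ref{lem:mobius_sum}), this double sum collapses to the four terms indexed by $(k_1, k_2) \in \{0,1\}^2$, yielding exactly the claimed four-term identity.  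The only step requiring any care is the bijection in the second paragraph and its degree bookkeeping; everything else is elementary manipulation of Möbius sums.
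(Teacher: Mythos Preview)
Your proof is correct and follows essentially the same approach as the paper: double M\"obius inversion over monic divisors of $Q$ and $R$, the change of variables $(Q,R)\mapsto(Q/g_Q,R/g_R)$ to identify the inner count with $\Ntotal(d_1-k_1,d_2-k_2)$, and then collapsing via Lemma~\ref{lem:mobius_sum}. Your version simply spells out the degree bookkeeping for the bijection in more detail than the paper does.
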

\begin{proof}
By the principle of inclusion-exclusion (or M\"obius inversion on the product of two posets of monic divisors for $Q$ and $R$):
\[ N(d_1, d_2) = \sum_{g,h \text{ monic}} \mu(g)\mu(h) F(g,h), \]
where $F(g,h)$ is the number of intersecting pairs $(Q,R)$ where $g|\gcdp(Q)$ and $h|\gcdp(R)$. A change of variables $Q=gQ'$, $R=hR'$ shows that $F(g,h)=\Ntotal(d_1-\degp g, d_2-\degp h)$. The sum becomes $\sum_{k_1,k_2 \ge 0} M_{k_1}M_{k_2} \Ntotal(d_1-k_1, d_2-k_2)$. Using Lemma~\ref{lem:mobius_sum} truncates the sum to the four terms shown.
\end{proof}

\begin{proof}[\textbf{Proof of the Main Theorem}]
We use Proposition~\ref{prop:mobius_inversion} and the formula for $\Ntotal$ from Theorem~\ref{thm:n_total}. The condition $1 \le d_1 < d_2$ ensures that for all terms $\Ntotal(d_a, d_b)$ appearing, we have $d_b \ge d_a \ge 0$ and $d_b \ge 1$, so Theorem~\ref{thm:n_total} always applies.

First, we group the terms from Proposition~\ref{prop:mobius_inversion}:
\[ N(d_1, d_2) = \begin{aligned}[t] &\left[\Ntotal(d_1, d_2) - q \Ntotal(d_1, d_2-1)\right] \\& - q\left[\Ntotal(d_1-1, d_2) - q \Ntotal(d_1-1, d_2-1)\right].\end{aligned} \]
Let's analyze the first bracket, using $\Ntotal(d_a,d_b)=q^{2d_b-d_a}\Sigma(d_a)$. Since $d_1 < d_2$, we have $d_2-1 \ge d_1 \ge 1$.
  \begin{align*} q^{2d_2-d_1}\Sigma(d_1) - q \cdot q^{2(d_2-1)-d_1}\Sigma(d_1) &= (q^{2d_2-d_1} - q^{2d_2-d_1-1})\Sigma(d_1) \\
    &= (q-1)q^{2d_2-d_1-1}\Sigma(d_1).
  \end{align*}
Similarly, for the second bracket, since $d_1-1 < d_1 < d_2$, we have $d_2 > d_1-1$ and $d_2-1 \ge d_1-1 \ge 0$.
\[ q^{2d_2-(d_1-1)}\Sigma(d_1-1) - q \cdot q^{2(d_2-1)-(d_1-1)}\Sigma(d_1-1) = (q-1)q^{2d_2-d_1}\Sigma(d_1-1). \]
Substituting back, we get a simplified expression for $N(d_1, d_2)$:
\[ N(d_1, d_2) = (q-1)q^{2d_2-d_1-1}\Sigma(d_1) - q\left[(q-1)q^{2d_2-d_1}\Sigma(d_1-1)\right] \]
\[ N(d_1, d_2) = (q-1) \left[ q^{2d_2-d_1-1}\Sigma(d_1) - q^{2d_2-d_1+1}\Sigma(d_1-1) \right]. \]
We now evaluate this final expression.

\textbf{Case 1: $d_1 \ge 2$.} Then $d_1-1 \ge 1$. We use $\Sigma(d_1) = q^{3d_1} + q^{3d_1-1} - q^{2d_1-1}$ and $\Sigma(d_1-1) = q^{3d_1-3} + q^{3d_1-4} - q^{2d_1-3}$.
\begin{align*}
  N(d_1, d_2) &= \begin{aligned} &(q-1) \bigl[ q^{2d_2-d_1-1}(q^{3d_1} + q^{3d_1-1} - q^{2d_1-1}) \\
  &- q^{2d_2-d_1+1}(q^{3d_1-3} + q^{3d_1-4} - q^{2d_1-3}) \bigr] \end{aligned} \\
  &= \begin{aligned} & (q-1) \bigl[ (q^{2d_1+2d_2-1} + q^{2d_1+2d_2-2} - q^{d_1+2d_2-2}) \\& - (q^{2d_1+2d_2-2} + q^{2d_1+2d_2-3} - q^{d_1+2d_2-2}) \bigr] \end{aligned} \\
&= (q-1) \left[ q^{2d_1+2d_2-1} - q^{2d_1+2d_2-3} \right] \\
  &= (q-1)q^{2d_1+2d_2-3}(q^2-1) \\
&= (q-1)^2(q+1)q^{2d_1+2d_2-3}.
\end{align*}

\textbf{Case 2: $d_1 = 1$.} Then $d_1-1 = 0$. We use $\Sigma(1) = q^3+q^2-q$ and $\Sigma(0)=1$.
\begin{align*}
N(1, d_2) &= (q-1) \left[ q^{2d_2-1-1}\Sigma(1) - q^{2d_2-1+1}\Sigma(0) \right] \\
&= (q-1) \left[ q^{2d_2-2}(q^3+q^2-q) - q^{2d_2}(1) \right] \\
&= (q-1) \left[ (q^{2d_2+1} + q^{2d_2} - q^{2d_2-1}) - q^{2d_2} \right] \\
&= (q-1) \left[ q^{2d_2+1} - q^{2d_2-1} \right] = (q-1)q^{2d_2-1}(q^2-1) \\
&= (q-1)^2(q+1)q^{2d_2-1}.
\end{align*}
The main theorem's formula for $d_1=1$ is $(q-1)^2(q+1)q^{2(1)+2d_2-3} = (q-1)^2(q+1)q^{2d_2-1}$, which matches our result.

Both cases yield a result consistent with the formula in the main theorem. This completes the proof.
\end{proof}

\end{response}

\bibliography{BEMV-biblio}
\bibliographystyle{plain}

\end{document}